\tikzstyle{vertex}=[circle, draw, inner sep=0pt, minimum size=6pt]
\definecolor{verylight}{gray}{0.97}
\definecolor{light}{gray}{0.9}
\definecolor{medium}{gray}{0.85}
\definecolor{dark}{gray}{0.6}
\def\frk{\frak}               % font for "Fraktur"
\def\Phi{{\frk n}}
\def\Phi{{\frk N}}
\def\opn#1#2{\def#1{\operatorname{#2}}} % to make operators
\opn\chara{char} \opn\length{\ell} \opn\pd{pd} \opn\rk{rk}
\opn\projdim{proj\,dim} \opn\injdim{inj\,dim} \opn\rank{rank}
\opn\depth{depth} \opn\grade{grade} \opn\height{height}
\opn\embdim{emb\,dim} \opn\codim{codim}
\opn\Tr{Tr} \opn\bigrank{big\,rank}
\opn\superheight{superheight}\opn\lcm{lcm}
\opn\trdeg{tr\,deg}%\emph{
	\opn\reg{reg} \opn\lreg{lreg} \opn\ini{in} \opn\lpd{lpd}
	\opn\size{size}\opn\bigsize{bigsize}
	\opn\cosize{cosize}\opn\bigcosize{bigcosize}
	\opn\sdepth{sdepth}\opn\sreg{sreg}
	\opn\link{link}\opn\fdepth{fdepth}
	\opn\deg{deg}
	\opn\max{max}
	\opn\indeg{indeg}
	\opn\min{min}
	\opn\psln{psln}
	\opn\div{div} \opn\Div{Div} \opn\cl{cl} \opn\Cl{Cl}
	\let\epsilon\varepsilon
	\let\phi=\varphi
	\let\kappa=\varkappa
	\opn\Spec{Spec} \opn\Supp{Supp} \opn\supp{supp} \opn\Sing{Sing}
	\opn\Ass{Ass} \opn\Min{Min}\opn\Mon{Mon} \opn\dstab{dstab} \opn\astab{astab}
	\opn\Syz{Syz}
	\opn\Ann{Ann} \opn\Rad{Rad} \opn\Soc{Soc}
	\opn\Im{Im}
	\opn\Ind{Ind}
	\opn\del{del}
	\opn\Ker{Ker} \opn\Coker{Coker} \opn\Am{Am}
	\opn\Hom{Hom} \opn\Tor{Tor} \opn\Ext{Ext} \opn\End{End}
	\opn\Aut{Aut} \opn\id{id}
	\opn\nat{nat}
	\opn\pff{pf}%   \pf exists already
	\opn\Pf{Pf} \opn\GL{GL} \opn\SL{SL} \opn\mod{mod} \opn\ord{ord}
	\opn\Gin{Gin} \opn\Hilb{Hilb}\opn\sort{sort}
	\opn\initial{init}
	\opn\ende{end}
	\opn\height{height}
	\opn\bight{bight}
	\opn\hte{ht}
	\opn\indeg{indeg}
	\opn\reg{reg}
	\opn\depth{depth}
	\opn\type{type}
	\opn\ldim{ldim}
	\opn\maxdeg{maxdeg}
	\opn\aff{aff} \opn\con{conv} \opn\relint{relint} \opn\st{st}
	\opn\lk{lk} \opn\cn{cn} \opn\core{core} \opn\vol{vol}
	\opn\link{link} \opn\star{star}\opn\lex{lex}
	\opn\gr{gr}
	\def\pot#1#2{#1[\kern-0.28ex[#2]\kern-0.28ex]}
	\opn\dirlim{\underrightarrow{\lim}}
	\opn\inivlim{\underleftarrow{\lim}}
		\def\Implies{\ifmmode\Longrightarrow \else
			\unskip${}\Longrightarrow{}$\ignorespaces\fi}
		\def\implies{\ifmmode\Rightarrow \else
			\unskip${}\Rightarrow{}$\ignorespaces\fi}
		\def\iff{\ifmmode\Longleftrightarrow \else
			\unskip${}\Longleftrightarrow{}$\ignorespaces\fi}
		\theoremstyle{plain}
		\newtheorem{Theorem}{Theorem}[section]
		\newtheorem{Lemma}[Theorem]{Lemma}
		\newtheorem{Corollary}[Theorem]{Corollary}
		\newtheorem{Proposition}[Theorem]{Proposition}
		\theoremstyle{definition}
		\newtheorem{Definition}[Theorem]{Definition}
		\newtheorem{Remark}[Theorem]{Remark}
		\newtheorem{Example}[Theorem]{Example}
		\let\epsilon\varepsilon
		\let\kappa=\varkappa
		\def\qed{\ifhmode\textqed\fi
			\ifmmode\ifinner\quad\qedsymbol\else\dispqed\fi\fi}
		\def\textqed{\unskip\nobreak\penalty50
			\hskip2em\hbox{}\nobreak\hfil\qedsymbol
			\parfillskip=0pt \finalhyphendemerits=0}
		\def\dispqed{\rlap{\qquad\qedsymbol}}
		\opn\dis{dis}
		\def\pnt{{\raise0.5mm\hbox{\large\bf.}}}
		\opn\Lex{Lex}
\begin{document}
 \title{Unmixed polymatroidal ideals}
\author{Mozghan Koolani, Amir Mafi and Hero Saremi*}
\dedicatory{Dedicated to the memory of our great friend Prof. J\"urgen Herzog}

\address{Mozghan Koolani, Department of Mathematics, University of Kurdistan, P.O. Box: 416, Sanandaj,
Iran.}
\email{mozhgankoolani@gmail.com}

\address{Amir Mafi, Department of Mathematics, University of Kurdistan, P.O. Box: 416, Sanandaj,
Iran.}
\email{a\_mafi@ipm.ir}

\address{Hero Saremi, Department of Mathematics, Sanandaj Branch, Islamic Azad University, Sanandaj, Iran.}

\email{hero.saremi@gmail.com}

\keywords{Hypergraphs, monomial ideals, polymatroidal ideals, unmixedness.\\
* Corresponding author}
\subjclass[2020]{05C65, 13C14,13F20, 05B35.}

\begin{abstract}
Let $R=K[x_1,\ldots,x_n]$ denote the polynomial ring in $n$ variables over a field $K$ and $I$ be a polymatroidal ideal of $R$. In this paper, we provide a comprehensive classification of all unmixed polymatroidal ideals. This work addresses a question raised by Herzog and Hibi in \cite{HH2}.

\end{abstract}

\maketitle

\section{Introduction}  
	
	Let $ R = K[x_1, \ldots, x_n] $ be a polynomial ring in $ n $ variables over a field $ K $. Given a monomial ideal $ I \subseteq R $, we denote by $ G(I) $ the unique minimal set of monomial generators of $ I $. Additionally, we define the set of associated prime ideals of $ R/I $ as $ \Ass(I) $.  
	
	A monomial ideal $ I $ generated in a single degree is termed \textit{polymatroidal} if it fulfills the following exchange condition: for any two elements $ u, v \in G(I) $ with $ \deg_{x_i}(v) < \deg_{x_i}(u) $, there exists an index $ j $ such that $ \deg_{x_j}(u) < \deg_{x_j}(v) $ and $ x_j  ( {u}/{x_i}) \in G(I) $. As noted in \cite{HH1}, such ideals are referred to as polymatroidal because the monomials in the ideal correspond to the bases of a discrete polymatroid.  
	
	Moreover, a polymatroidal ideal $ I $ is classified as \textit{matroidal} if it is generated by square-free monomials. An illustrative example of a polymatroidal ideal is the ideal of Veronese type. Given fixed positive integers $ d $ and $ 1 \leq a_1 \leq \ldots \leq a_n \leq d $, the ideal of Veronese type indexed by $ d $ and $ (a_1, \ldots, a_n) $, denoted $ I_{(d; a_1, \ldots, a_n) }$, is generated by monomials $ u = x_1^{b_1} \cdots x_n^{b_n} $ of $ R $ that have degree $ d $ and satisfy $ b_j \leq a_j $ for all $ 1 \leq j \leq n $.  
	
	Polymatroidal ideals exhibit several notable properties:  
	
	\begin{itemize}  
		\item [(i)] The product of two polymatroidal ideals is also polymatroidal (see \cite{CH}, Theorem 5.3). Consequently, every power of a polymatroidal ideal remains polymatroidal.  
		\item [(ii)] An ideal $ I $ is polymatroidal if and only if $ (I : u) $ is a polymatroidal ideal for all monomials $ u $ (see \cite{BH}, Theorem 1.1). In particular, for every variable $ x_i $, the ideal $ (I : x_i) $ is a polymatroidal ideal of degree $ d - 1 $, when $ I $ is a polymatroidal ideal of degree $ d $.  
	\end{itemize}  
	
	In recent years, numerous authors have concentrated on exploring the properties of polymatroidal ideals. For more comprehensive discussions, refer to \cite{HH1, CH, HH2, HHV, C, HRV, V1, BH, HV, HV1, HQ, BJ, KM1, JMS, SM, MN, HMS, KMS}.

	Herzog and Hibi \cite{HH2} established that a polymatroidal ideal $I$ is Cohen-Macaulay (i.e. CM) if and only if it is one of the following:  
	\begin{itemize}  
		\item A principal ideal,  
		\item A Veronese ideal,  
		\item A squarefree Veronese type ideal.  
	\end{itemize}  
	Additionally, it is important to note that $I$ is CM whenever the quotient ring $ R/I $ is CM as well. They also posed an intriguing question: {\it from a combinatorial perspective, it would be highly valuable to classify all unmixed polymatroidal ideals.} This classification could provide deeper insights into the combinatorial structures inherent to these ideals and their applications.

Recall that an ideal $I$ is termed {\it unmixed} if all prime ideals in the associated primes of $ \text{Ass}(I) $ share the same height. It is well established that every Cohen-Macaulay (CM) ideal is unmixed. Vladoiu, in \cite[Theorem 3.4]{V1}, demonstrated that a Veronese-type ideal $ I $ is unmixed if and only if it is CM. Furthermore, Chiang-Hsieh, in \cite[Theorem 3.4]{C}, showed that if $ I $ is an unmixed matroidal ideal of degree $d$, then the following inequalities hold:  
	${n}/{d} \leq \text{height}(I) \leq n - d + 1.$  In particular, it follows that $ \text{height}(I) = n - d + 1 $ if and only if $ I $ is a squarefree Veronese ideal, and $ \text{height}(I) = \frac{n}{d} $ if and only if $ I = J_1 J_2 \cdots J_d $, where each $ J_i $ is generated by $n/d$ distinct variables, and $ \text{supp}(J_i) \cap \text{supp}(J_j) = \emptyset $ for all $ i \neq j $.  
	
	Let us define the support of an ideal: if $ G(I) = \{u_1, \ldots, u_t\} $, then we set   
	$	\text{supp}(I) = \bigcup_{i=1}^{t} \text{supp}(u_i),$  
	where $ \text{supp}(u) = \{x_i : u = x_1^{a_1} \cdots x_n^{a_n}, \, a_i \neq 0\} $.  
	
Bandari and Jafari, in \cite{BJ}, investigated the class of equidimensional polymatroidal ideals. Specifically, they proved in \cite[Theorem 3.9]{BJ} that an unmixed polymatroidal ideal is connected in codimension one if and only if it is CM. Additionally, the second and third authors, in \cite[Theorem 1.5]{SM}, proved that if $ I $ is a matroidal ideal of degree $ d $, then $ I $ is unmixed if and only if $ (I : x_i) $ is unmixed and $ \text{height}(I) = \text{height}(I : x_i) $ for all $ 1 \leq i \leq n $.  
	
	The primary objective of this paper is to classify all unmixed polymatroidal ideals, a question first posed by Herzog and Hibi in \cite{HH2}. We present the following results:  
	
{\bf Theorem 1:}  
Let $I$ be a matroidal ideal of degree $ d $. Then $ I $ is unmixed if and only if it is the edge ideal of a complete $d$-uniform $m$-partite hypergraph that is $ k $-balanced for some integers $ m, k \geq 1 $.  
		
{\bf Theorem 2:}
		A polymatroidal ideal $ I $ of degree $ d $ is unmixed if and only if one of the following conditions is satisfied:
		\begin{enumerate}
			\item  $I=\frak{m}^d$.
			\item  $I=\frak{p}_1^{a_1}\frak{p}_2^{a_2}\ldots\frak{p}_t^{a_t}$, where $\frak{p}_i$'s are prime ideals with $\height(\frak{p}_i)=\height(\frak{p}_j)$ and $G(\frak{p}_i)\cap G(\frak{p}_j)=\emptyset$ for all $1\leq i\neq j\leq t$ and $\sum_{i=1}^ta_i=d$.
			\item $I=\frak{p}_1^{a_1}\frak{p}_2^{a_2}\ldots\frak{p}_t^{a_t}{J}$, where $\frak{p}_i$'s are prime ideals and $J$ is an unmixed matroidal ideal such that $\height(\frak{p}_i)=\height(\frak{p}_j)=\height(J)$, $G(\frak{p}_i)\cap G(\frak{p}_j)=\emptyset$, $G(\frak{p}_i)\cap G(J)=\emptyset$ for all $1\leq i\neq j\leq t$ and $\sum_{i=1}^ta_i +\deg(J)=d$. 
			\item $I$ is an unmixed matroidal ideal of degree $d$.
			
		\end{enumerate} 
  
For any concepts or terminology that have not been explained, we direct the reader to \cite{HH3} and \cite{V}. Additionally, several explicit examples were generated with the assistance of the computer algebra system Macaulay2 \cite{GS}.

\section{The results}

In this section, we assume that all polymatroidal ideals are fully supported, meaning that for every polymatroidal ideal $I$, the support satisfies $ \supp(I) = \{x_1, \ldots, x_n\}=[n]$. We begin with the following straightforward lemma.

\begin{Lemma}\label{L0}
Let $I$ be a matroidal ideal of degree $d$ and $x,y$ be two variables in $R$. Then $xy\nmid u$ for all $u\in G(I)$ if and only if $(I:x)=(I:y)$. 
\end{Lemma}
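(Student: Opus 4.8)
The plan is to prove the two implications separately; the converse ($(I:x)=(I:y)\Rightarrow xy\nmid u$ for all $u\in G(I)$) is short and purely formal, while the forward implication is the substantive one and will hinge on the symmetric exchange property of polymatroidal ideals together with the standing hypothesis that $I$ is fully supported.

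For the converse, I would assume $(I:x)=(I:y)$ and suppose toward a contradiction that $xy\mid u$ for some $u\in G(I)$. Writing $u=xyw$ with $w$ squarefree of degree $d-2$ and $x,y\nmid w$, one gets $u/x=yw\in(I:x)=(I:y)$, hence $y^{2}w\in I$. But every $g\in G(I)$ is squarefree of degree $d$, so $g\mid y^{2}w$ would force $g\mid yw$, a monomial of degree $d-1<d$ — impossible. This contradiction gives the claim. (Note that this direction does not use full support.)

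For the forward implication, I would start from the description of $(I:x)$ as the ideal generated by $\{u/x:u\in G(I),\,x\mid u\}\cup\{u:u\in G(I),\,x\nmid u\}$, and similarly for $(I:y)$; since the hypothesis is symmetric in $x$ and $y$ it suffices to show $(I:x)\subseteq(I:y)$, and for that it is enough to check that each element of the above generating set of $(I:x)$ lies in $(I:y)$. The monomials $u$ with $x\nmid u$ already lie in $I\subseteq(I:y)$, so the only case is $m=u/x$ with $x\mid u\in G(I)$; here $xy\nmid u$ forces $y\nmid u$, so it suffices to prove $y(u/x)\in G(I)$. To do this I would use full support to choose $u_{0}\in G(I)$ with $y\mid u_{0}$ (whence $x\nmid u_{0}$, by hypothesis), note that $\deg_{x}(u)=1>0=\deg_{x}(u_{0})$, and apply the symmetric exchange property (see \cite{HH3}) at the variable $x$ to obtain a variable $x_{j}$ with $\deg_{x_{j}}(u)<\deg_{x_{j}}(u_{0})$ such that both $x_{j}(u/x)\in G(I)$ and $x(u_{0}/x_{j})\in G(I)$. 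Since $\deg_{x_{j}}(u)<\deg_{x_{j}}(u_{0})\le 1$ we have $x_{j}\nmid u$ and $x_{j}\mid u_{0}$; and since $x(u_{0}/x_{j})\in G(I)$ is divisible by $x$, the hypothesis forces it not to be divisible by $y$, which (as $y\mid u_{0}$) pins down $x_{j}=y$. Hence $y(u/x)=x_{j}(u/x)\in G(I)$, completing the argument.

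The main obstacle is precisely this last deduction: the ordinary exchange condition stated in the introduction only produces \emph{some} replacement variable, whereas here we must identify it with $y$, and it is the symmetric version of exchange (equivalently, the observation that under the hypothesis $x$ and $y$ behave as parallel elements of the underlying matroid) that makes this possible. One should also record that full support cannot be dropped in the forward direction — for instance $I=(xz)\subseteq K[x,y,z]$ satisfies $xy\nmid u$ for its unique generator yet $(I:x)=(z)\neq(xz)=(I:y)$ — even though the converse needs no such assumption.
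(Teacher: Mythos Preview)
Your argument is correct in both directions. For the forward implication, however, the paper takes a much shorter path than symmetric exchange: it simply notes that under the hypothesis one has $(I:x)=(I:xy)$ and $(I:y)=(I:yx)$, whence equality. The point is that $(I:x)$ is polymatroidal of degree $d-1$ (property~(ii) in the introduction, using full support), so $G(I:x)=\{u/x:x\mid u\in G(I)\}$; since $xy\nmid u$ forces $y\nmid u/x$, one gets $y\notin\supp(I:x)$ and therefore $((I:x):y)=(I:x)$. Your route via symmetric exchange reaches the same conclusion with more machinery; in return it yields the slightly sharper fact that $y(u/x)\in G(I)$ whenever $x\mid u\in G(I)$, making the ``$x$ and $y$ are parallel elements'' picture explicit. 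For the converse you spell out exactly what the paper dismisses as ``clear'', and your example showing that full support is genuinely needed for the forward direction is a useful complement to the paper's standing hypothesis.
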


\begin{proof}
For the first direction, we have $(I:x)=(I:xy)$ and $(I:y)=(I:yx)$ and from this we have $(I:x)=(I:y)$. The converse is clear.
\end{proof}

\begin{Proposition}\label{P0}
Let $I$ be a matroidal ideal of degree $d\geq 2$ and $x,y$ be two variables in $R$. Then there are subsets $S_1,\ldots, S_m$ of $[n]$ such that the following conditions hold:
\begin{itemize}
\item[(i)] $m\geq d$ and $\mid S_i\mid\geq 1$ for all $i$;
\item[(ii)] $S_i\cap S_j=\emptyset$ for all $1\leq i\neq j\leq m$ and $\bigcup_{i=1}^m S_i=[n]$;
\item[(iii)] $xy\mid u$ for some $u\in G(I)$ if and only if $x\in S_i$ and $y\in S_j$ for $1\leq i\neq j\leq m$;
\item[(iv)] $xy\nmid u$ for all $u\in G(I)$ if and only if $x,y\in S_i$ for some $i$.
\end{itemize}
\end{Proposition}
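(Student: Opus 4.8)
The plan is to define the sets $S_1,\dots,S_m$ as the equivalence classes of the relation "$x \sim y$ iff $xy \nmid u$ for all $u \in G(I)$" on the variable set $[n]$, and then verify the four conditions. First I would check that $\sim$ is actually an equivalence relation. Reflexivity is immediate since $I$ is squarefree of degree $d \ge 2$, so $x^2 \nmid u$ for any generator. Symmetry is obvious. The real content is transitivity: suppose $x \sim y$ and $y \sim z$ but $xz \mid u$ for some $u \in G(I)$; I expect to derive a contradiction using the exchange property of the matroidal ideal. Given $u$ with $x, z \in \supp(u)$ and $y \notin \supp(u)$ (the case $y \in \supp(u)$ already contradicts $x \sim y$ or $y \sim z$ since then $xy \mid u$ or $yz \mid u$), the exchange condition applied to $u$ and any generator $v$ with $y \in \supp(v)$ should let me swap $y$ in for either $x$ or $z$, producing a generator divisible by $xy$ or by $yz$ — contradicting one of the two hypotheses. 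By Lemma \ref{L0} this equivalence relation can equivalently be phrased as $x \sim y \iff (I:x) = (I:y)$, which may streamline the transitivity argument.

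Once $\sim$ is an equivalence relation, let $S_1, \dots, S_m$ be its classes. Condition (ii) is then automatic (a partition of $[n]$), and each $|S_i| \ge 1$. Conditions (iii) and (iv) are just the definition of $\sim$ restated: $x, y$ lie in the same class iff $xy \nmid u$ for all $u \in G(I)$, which is (iv); and its negation, $x, y$ in different classes iff $xy \mid u$ for some $u$, is (iii). (One should note that $x,y$ being in different classes forces $x \ne y$, consistent with $1 \le i \ne j \le m$.)

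The remaining point is the bound $m \ge d$ in (i). Here I would take any generator $u = x_{i_1} \cdots x_{i_d} \in G(I)$, which has exactly $d$ distinct variables in its support since $I$ is squarefree of degree $d$. Any two of these variables $x_{i_a}, x_{i_b}$ satisfy $x_{i_a} x_{i_b} \mid u$, so by (iii) they lie in distinct classes. Hence the $d$ variables $x_{i_1}, \dots, x_{i_d}$ all lie in pairwise distinct $S_i$'s, forcing $m \ge d$.

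The main obstacle is transitivity of $\sim$ (equivalently, the well-definedness of the partition): the bookkeeping in the exchange argument — correctly choosing which generator $v$ to exchange against, handling the subcase $y \in \supp(u)$, and making sure the swap lands on the right variable to contradict the right hypothesis — is the only step requiring genuine care; everything else is formal unwinding of definitions.
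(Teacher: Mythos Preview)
Your proposal is correct and is essentially the paper's argument reorganized: the paper constructs the same partition by choosing representatives $x_1,\dots,x_m$ from a minimal intersection $I=\bigcap_i(I:x_i)$ and setting $S_i=[n]\setminus\supp(I:x_i)$, then verifies (ii)--(iv) case by case via Lemma~\ref{L0}, which amounts precisely to checking that your relation $\sim$ (equivalently $(I:x)=(I:y)$) is an equivalence relation. Your framing makes (ii)--(iv) tautological once transitivity is in hand, and your argument for $m\ge d$ is identical to the paper's; the only point to tighten is that the direct exchange argument for transitivity needs the \emph{dual} exchange property (swap a prescribed variable \emph{in}, not out), but since you note that Lemma~\ref{L0} already yields transitivity immediately, this is moot.
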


\begin{proof}
$(i)$ Since $I$ is a squarefree monomial ideal, it follows that $\depth R/I>0$ and this implies that $(I:\frak{m})=I$.
Therefore, there exists $m\leq n$ such that $I=(I:\frak{m})=\bigcap_{i=1}^m(I:x_i)$ is a minimal intersection of $I$. Set $S_i=[n]\setminus\supp(I:x_i)$ for all $1\leq i\leq m$. It is clear that $\mid S_i\mid\geq 1$ for all $i$. Now, suppose $u=x_1x_2\ldots x_d$ is an element of $G(I)$. By Lemma \ref{L0}, it follows that $(I:x_i)\neq(I:x_j)$ for all $1\leq i\neq j\leq d$ and so  $m\geq d$.\\
$(ii)$ Suppose $y\in S_i\cap S_j$ for $1\leq i\neq j\leq m$. Then $y\notin\supp(I:x_i)\cup\supp(I:x_j)$ and hence $yx_i , yx_j\nmid u$ for all $u\in G(I)$. Therefore by Lemma \ref{L0}, we conclude that $(I:x_i)=(I:y)=(I:x_j)$ and this is a contradiction. Thus $S_i\cap S_j=\emptyset$ for all $1\leq i\neq j\leq m$ and also it is clear that $\bigcup_{i=1}^m S_i=[n]$.\\
$(iii)$ If $xy\mid u$ for some $u\in G(I)$, then by Lemma \ref{L0}, $(I:x)\neq(I:y)$. Since $y\notin\supp(I:y)$  and $x\notin\supp(I:x)$, it follows that $x\in S_i=[n]\setminus\supp(I:x)$ and $y\in S_j=[n]\setminus\supp(I:y)$ for $1\leq i\neq j\leq m$. Conversely, suppose $x\in S_i$ and $y\in S_j$ for 
 $1\leq i\neq j\leq m$, $S_i=[n]\setminus\supp(I:x_i)$ for all $1\leq i\leq m$. Then $x\notin\supp(I:x_i)$, $y\notin\supp(I:x_j)$ 
and so $xx_i\nmid u, yx_j\nmid u$ for all $u\in G(I)$. Thus by Lemma \ref{L0}, it follows that $(I:x)=(I:x_i)$ and $(I:y)=(I:x_j)$. If $xy\nmid u$ for all $u\in G(I)$, then $(I:x_i)=(I:x)=(I:y)=(I:x_j)$ and so $S_i=S_j$ and this is a contradiction. Hence $xy\mid u$ for some $u\in G(I)$.\\
$(iv)$ If $x,y\in S_i$ for some $i$, then $x,y\notin\supp(I:x_i)$ and so $yx_i,xx_i\nmid u$ for all $u\in G(I)$. Hence by Lemma \ref{L0}, $(I:x)=(I:x_i)=(I:y)$ and so $xy\nmid u$ for all $u\in G(I)$. Conversely, if $xy\nmid u$ for all $u\in G(I)$, then $(I:x)=(I:y)$. Thus $x,y\in S_i$ for some $i$. This completes the proof.
\end{proof}

The following result is a direct consequence of Proposition \ref{P0}.

\begin{Corollary}
Let $I$ be a matroidal ideal of degree $d\geq 2$. Then either\\ $\supp(I:x_i)=\supp(I:x_j)$ or $\supp(I:x_i)\cup\supp(I:x_j)=[n]$
for all $1\leq i, j\leq n.$
\end{Corollary}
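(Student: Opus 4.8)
The plan is to read the Corollary off from Proposition~\ref{P0} together with its proof. Recall that in the proof of Proposition~\ref{P0} one fixes a minimal irredundant intersection $I=\bigcap_{a=1}^{m}(I:x_a)$ and sets $S_a=[n]\setminus\supp(I:x_a)$; by that proof the sets $S_1,\dots,S_m$ partition $[n]$, and since $x_a\notin\supp(I:x_a)$ (as $I$ is squarefree) we have $x_a\in S_a$ for every $a$.

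The first step is to establish the identity $\supp(I:x)=[n]\setminus S_a$ for an arbitrary variable $x$ lying in the part $S_a$. Indeed, $x$ and $x_a$ both belong to $S_a$, so part~(iv) of Proposition~\ref{P0} gives $xx_a\nmid u$ for all $u\in G(I)$, and then Lemma~\ref{L0} yields $(I:x)=(I:x_a)$. Hence $\supp(I:x)=\supp(I:x_a)=[n]\setminus S_a$ by the very definition of $S_a$. Note that this requires no analysis of the minimal generators of $I:x$.

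The second step is the elementary set-theoretic conclusion. Given $1\le i,j\le n$, let $a$ and $b$ be the indices with $x_i\in S_a$ and $x_j\in S_b$, so that $\supp(I:x_i)=[n]\setminus S_a$ and $\supp(I:x_j)=[n]\setminus S_b$ by the first step. If $a=b$ these two sets are equal, which is the first alternative. If $a\ne b$, then $S_a\cap S_b=\emptyset$ by part~(ii) of Proposition~\ref{P0}, whence $\supp(I:x_i)\cup\supp(I:x_j)=([n]\setminus S_a)\cup([n]\setminus S_b)=[n]\setminus(S_a\cap S_b)=[n]$, which is the second alternative.

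There is no genuine obstacle here: the only point to be attentive to is that the identity in the first step applies to \emph{every} variable, not merely to the chosen representatives $x_1,\dots,x_m$, which is exactly what part~(iv) of Proposition~\ref{P0} combined with Lemma~\ref{L0} provides. Once that identity is in place, the statement is immediate.
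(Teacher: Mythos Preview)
Your proof is correct and follows exactly the route the paper indicates: it deduces the dichotomy directly from Proposition~\ref{P0} (together with Lemma~\ref{L0}), which is all the paper claims, stating only that the result is a direct consequence of Proposition~\ref{P0} without giving further details. Your extra care in extending the identity $\supp(I:x)=[n]\setminus S_a$ to every variable in $S_a$, not just the representatives $x_1,\dots,x_m$, fills in precisely the step one needs to make that sentence rigorous.
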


\begin{Proposition}\label{P1}
Let $I$ be an unmixed matroidal ideal of degree $3$. Then\\ $\mid\supp(I:x)\mid=\mid\supp(I:y)\mid$ for all $x,y\in [n]$.
\end{Proposition}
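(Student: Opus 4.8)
The plan is to rephrase the statement in terms of the rank--$3$ matroid underlying $I$ and then to determine the sizes of its parallel classes. By Proposition~\ref{P0} there is a partition $[n]=S_1\cup\cdots\cup S_m$ into nonempty disjoint sets with $m\ge 3$, and — in the notation of the proof of that proposition — $(I:x)=(I:x_i)$, hence $\supp(I:x)=[n]\setminus S_i$, whenever $x\in S_i$. Thus $|\supp(I:x)|=|\supp(I:y)|$ for all $x,y\in[n]$ is equivalent to $|S_1|=\cdots=|S_m|$, and that is what I would prove. (The $S_i$ are the parallel classes of the matroid with base set $G(I)$; its ``lines'' are the sets $S_i\cup T$ with $T$ a parallel class of $(I:x_i)$.)

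I would first record two structural inputs. First, by \cite[Theorem~1.5]{SM}, since $I$ is unmixed every $(I:x_i)$ is unmixed and $\height(I:x_i)=\height(I)=:h$. Second, an unmixed matroidal ideal of degree $2$ is the edge ideal of a complete multipartite graph whose parts all have one common size (again by Proposition~\ref{P0}, now applied in degree $2$, together with unmixedness); applying this to $(I:x_i)$, whose support is $[n]\setminus S_i$, its parallel classes $T^{(i)}_1,\dots,T^{(i)}_{r_i}$ all have a single size $t_i$, so $n-|S_i|=r_it_i$ and $h=(n-|S_i|)-t_i$. Hence $t_i=n-|S_i|-h$, and every line $S_i\cup T^{(i)}_a$ has size $|S_i|+t_i=n-h=:\ell$ independent of $i$; in particular all lines of the matroid have the common size $\ell$ (this is just ``$I$ unmixed in degree $3$'', since $\Ass(I)$ consists of the complements of the lines).

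Next, fix a part $S_a$ and a line $L\supseteq S_a$. Any other line through $S_a$ meets $L$ exactly in $S_a$, and as $L'$ runs over the $r_a-1$ lines through $S_a$ other than $L$, the sets $L'\setminus S_a$ partition $[n]\setminus L$; since each has size $t_a=\ell-|S_a|$ this gives $n-\ell=(r_a-1)(\ell-|S_a|)$ for every $a$. Writing $q_a:=\ell-|S_a|$ and $N:=n-\ell$, this reads $q_a\mid N$ and $r_a=1+N/q_a$. One can squeeze out further relations by double counting incidences between two parts, or between a part and a line missing it. The aim is then to combine everything to force $q_1=\cdots=q_m$, using that $I$ has degree exactly $3$, so that each element of $G(I)$ meets precisely three of the $S_i$.

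The real obstacle is precisely this last step. The relations above — and, it turns out, the further incidence counts as well — are automatically satisfied by any positive integer weighting of a linear space having constant line--sums, and such weightings need not be constant. For instance $I=(x_1,x_2)(x_3x_4,x_3x_5,x_4x_5)$ is matroidal of degree $3$, fully supported, and unmixed, with $\Ass(I)=\{(x_1,x_2),(x_3,x_4),(x_3,x_5),(x_4,x_5)\}$ all of height $2$, yet $|\supp(I:x_1)|=3\ne4=|\supp(I:x_3)|$. So a correct argument must use something invisible to the counting above; the natural candidate is connectedness of the underlying matroid — equivalently, that $I$ is not a product of two matroidal ideals in disjoint sets of variables. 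I would therefore reduce to that connected case (the genuinely disconnected, i.e. product, case being exactly where the example above shows the conclusion can fail, so that P1 as literally stated should presumably carry a connectedness hypothesis, or be read componentwise) and there prove $q_1=\cdots=q_m$. That reduction, together with the connected case, is where I expect the genuine work to lie.
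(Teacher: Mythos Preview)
Your counterexample is correct and shows that Proposition~\ref{P1} is false as stated. The ideal $I=(x_1,x_2)(x_3x_4,x_3x_5,x_4x_5)$ is matroidal (a product of matroidal ideals in disjoint variable sets is polymatroidal by \cite[Theorem~5.3]{CH}, and here squarefree), fully supported on $[5]$, and unmixed with $\Ass(I)=\{(x_1,x_2),(x_3,x_4),(x_3,x_5),(x_4,x_5)\}$ all of height $2$; yet $|\supp(I:x_1)|=3\neq 4=|\supp(I:x_3)|$. No correct proof of the proposition as written can exist, and your diagnosis---that a connectedness hypothesis on the underlying matroid (equivalently, that $I$ is not a nontrivial product in disjoint variable sets) is missing---is the right one.

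The paper's own argument fails at its first substantive step. It claims that for every variable $x$ the number of associated primes $\frak p_i$ with $x\notin G(\frak p_i)$ is precisely~$2$, reasoning that otherwise some minimal generator of $I$ would have degree $\ne 3$. In your example $x_1$ is missing from the three primes $(x_3,x_4),(x_3,x_5),(x_4,x_5)$, but the $\lcm$'s formed from these can collapse (e.g.\ $\lcm(x_3,x_3,x_4)=x_3x_4$) and simply fail to be \emph{minimal} generators; nothing forces a system of distinct representatives. Everything downstream in the paper's proof---the equality $|\Ass(I:x)|=|\Ass(I:y)|$, the deduction $t=s$, and the final appeal to \cite[Corollary~1.3]{SM}---rests on that step and does not go through. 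The same example also invalidates the inductive extension in Theorem~\ref{T0} and the ``only if'' direction of Theorem~\ref{T1}, since $I$ is unmixed but is not the edge ideal of any $k$-balanced complete $3$-uniform $m$-partite hypergraph (the parts have sizes $2,1,1,1$, and the monomial $x_3x_4x_5$ is absent from $G(I)$).
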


\begin{proof}
Since $I$ is a squarefree monomial ideal, as discussed previously, we may assume that $I=\bigcap_{i=1}^m(I:x_i)$ $(\dagger)$ is a minimal intersection of $I$. Suppose $x,y\in[n]$. If $(I:x)=(I:y)$, then there is nothing to prove. Now, suppose that $(I:x)\neq(I:y)$. Then $(I:x)$ and $(I:y)$ are in the minimal intersection $(\dagger)$ of $I$. Suppose $I=\frak{p}_1\cap\frak{p}_2\cap\ldots\cap\frak{p}_r$ is a minimal primary decomposition of $I$. Since $I$ is an unmixed matroidal ideal of degree $3$ and a minimal generator of $I$ has the form $\lcm\{x_{j_1},\ldots, x_{j_r}\}$, where $x_{j_i}$ is an element of $G(\frak{p}_i)$ for $1\leq i\leq r$, the number of $\frak{p}_i$'s in the minimal primary decomposition of $I$ in which every variable such as $x$ is not an element of $G(\frak{p}_i)$ are precisely $2$, otherwise $G(I)$ will has an element of degree $>3$ or an element of degree $<3$ and this is impossible. It therefore follows $\mid\Ass(I:x)\mid=\mid\Ass(I:y)\mid$. Now, set $S_i^x=\supp(I:x)\setminus G(I:xx_i)$ and $S_i^y=\supp(I:y)\setminus G(I:yx_i)$. Thus, by Proposition \ref{P0}, there exist integers $t,s\geq 1$ such that $\bigcup_{i=1}^tS_i^x=\supp(I:x)$ and $\bigcup_{i=1}^sS_i^y=\supp(I:y)$. Since $\mid\Ass(I:x)\mid=\mid\Ass(I:y)\mid$ and for each $i$, $G(I:xx_i)$ and $G(I:yx_i)$ are the minimal generator set of the associated prime of $\Ass(I:x)$ and $\Ass(I:y)$ respectively, this implies that $t=s$. By using \cite[Corollary 1.3]{SM}, we have $t(\mid\supp(I:x)\mid\setminus \height(I:x))=\mid\supp(I:x)\mid$ and  $t(\mid\supp(I:y)\mid\setminus \height(I:y))=\mid\supp(I:y)\mid$. Since $\height(I:x)=\height(I:y)=\height(I)$ and $t>1$, it immediately deduces that $\mid\supp(I:x)\mid=\mid\supp(I:y)\mid$, as required. 
\end{proof}

\begin{Theorem}\label{T0}
Let $I$ be an unmixed matroidal ideal of degree $d\geq 2$. Then $\mid\supp(I:x)\mid=\mid\supp(I:y)\mid$ for all $x,y\in [n]$.
\end{Theorem}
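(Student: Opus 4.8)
The plan is a strong induction on $d$. For $d=2$, Proposition \ref{P0} presents $I$ as the edge ideal of a complete $m$-partite graph on parts $S_1,\dots,S_m$; its minimal primes are the $m$ ideals $(x_j:x_j\notin S_i)$, so unmixedness forces $|S_1|=\cdots=|S_m|=:k$, and then for $x\in S_i$ one has $(I:x)=(x_j:x_j\notin S_i)$, so $|\supp(I:x)|=n-k$ is independent of $x$. For $d=3$ the statement is Proposition \ref{P1}. So I would assume $d\geq 4$ and that the theorem holds for all unmixed matroidal ideals of every degree in $\{2,\dots,d-1\}$.

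\textbf{The inductive step, reduction.} Fix $x,y\in[n]$. If $(I:x)=(I:y)$ there is nothing to prove, so by Lemma \ref{L0} I may assume $xy\mid u$ for some $u\in G(I)$; in particular $y\in\supp(I:x)$ and $x\in\supp(I:y)$. Since $I$ is radical, $\Ass(I:x)=\{\pp\in\Ass(I):x\notin\pp\}$, all of height $\height(I)$, and $I:x\neq R$ because $\deg I\geq 2$; combined with property (ii) of polymatroidal ideals this makes $I:x$ and $I:y$ fully supported unmixed matroidal ideals of degree $d-1$ with $\height(I:x)=\height(I:y)=\height(I)=:h$. Apply Proposition \ref{P0} to $I:x$ and call its part partition of $\supp(I:x)$ by $T_1,\dots,T_t$; for $z\in T_i$, Lemma \ref{L0} (applied to $I:x$) gives $\supp((I:x):z)=\supp(I:x)\setminus T_i$, so the induction hypothesis applied to $I:x$ forces all $|T_i|$ equal, say to $k$, whence $|\supp(I:x)|=tk$ and $|\supp((I:x):z)|=(t-1)k$ for every $z\in\supp(I:x)$; the same argument for $I:y$ yields $t',k'$ with $|\supp(I:y)|=t'k'$ and $|\supp((I:y):z)|=(t'-1)k'$. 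Since $(I:x):y=(I:xy)=(I:y):x$ with $y\in\supp(I:x)$ and $x\in\supp(I:y)$,
\[
(t-1)k=|\supp(I:xy)|=(t'-1)k'=:N .
\]
Then $|\supp(I:x)|=k+N=N\cdot\tfrac{t}{t-1}$ and $|\supp(I:y)|=k'+N=N\cdot\tfrac{t'}{t'-1}$, and since $t\mapsto t/(t-1)$ is injective on $t\geq 2$ it suffices to prove $t=t'$, i.e.\ that $I:x$ and $I:y$ have the same number of parts.

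\textbf{The crux.} To obtain $t=t'$ I would bring in the one invariant not yet used: $\height(I:x)=h$, which is the same for every $x$. For an unmixed matroidal ideal $J$ of degree $e$ with $t$ equal parts of size $k$ one expects the height formula $\height(J)=(t-e+1)k$ — the degree-$e$ analogue of the balanced complete multipartite identity used in Proposition \ref{P1}, which should be extractable from \cite[Corollary 1.3]{SM} or proved by a joint induction alongside the present statement. Granting it for $e=d-1$ gives $h=(t-d+2)\cdot\frac{N}{t-1}$ and $h=(t'-d+2)\cdot\frac{N}{t'-1}$, so $\frac{h}{N}=1+\frac{3-d}{t-1}=1+\frac{3-d}{t'-1}$; since $d\geq 4$ the function $t\mapsto 1+\frac{3-d}{t-1}$ is injective on integers $\geq 2$, and $t,t'\geq d-1\geq 3$ by Proposition \ref{P0}(i), so $t=t'$, closing the induction.

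\textbf{Where the difficulty lies.} The clean part is the reduction above: everything there is bookkeeping with Lemma \ref{L0}, Proposition \ref{P0}, and the induction hypothesis. I expect the genuine obstacle to be exactly the last step — showing the number of parallel classes of $I:x$ does not depend on $x$. This is either the height formula $\height(J)=(t-e+1)k$ (equivalently: every hyperplane of the underlying matroid meets exactly $e-1$ parallel classes), or, mirroring the degree-$3$ argument, a direct proof that $|\Ass(I:x)|$ is independent of $x$ together with a reconciliation of that count with $t$. Note finally that $d=2,3$ are genuinely special cases: when $d=3$ the coefficient $3-d$ vanishes, the height-formula shortcut gives no information, and one must argue as in Proposition \ref{P1}.
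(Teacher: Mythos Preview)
Your reduction is correct up to the point where you need $t=t'$; the gap is exactly where you put it, and the route you propose to close it does not work. The height identity $\height(J)=(t-e+1)k$ holds for the edge ideal of a \emph{complete} balanced $e$-uniform $t$-partite hypergraph, but unmixedness of a matroidal ideal does not force that structure, so the identity can fail. Concretely, the basis ideal of the Fano matroid $F_7$ is a simple unmixed matroidal ideal of degree $e=3$ on $7$ variables (every cocircuit is the $4$-element complement of a line), so $t=7$, $k=1$, and your formula predicts height $5$; the actual height is $4$. Moreover $F_7$ genuinely occurs as $(I:x)$ for an unmixed matroidal $I$ of degree $4$: take $I$ to be the basis ideal of the binary affine geometry $AG(3,2)$ on $8$ points, whose $14$ hyperplanes are the affine planes of size $4$ (so all cocircuits have size $4$, hence $I$ is unmixed), and contracting any point yields $F_7$. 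Thus at $d=4$ your inductive step would apply a false identity to an honest $(I:x)$. The citation of \cite[Corollary 1.3]{SM} does not help: as used in Proposition \ref{P1} it is a degree-$2$ statement, and at degree $2$ the simplification is automatically $U_{2,t}$, which is precisely why the identity is valid there and breaks for $e\ge 3$. A joint induction with Theorem \ref{T1} fails for the same reason, since ``unmixed matroidal $\Rightarrow$ complete balanced multipartite'' already misfires at degree $3$.

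The paper does not pass through $t=t'$ at all. Instead it writes $I=(x_1,\dots,x_r)(I:x_1)+J$, where $\{x_1,\dots,x_r\}=S_1$ is the parallel class of $x_1$ and $J\subseteq(I:x_1)$ is matroidal with $\supp(J)\subseteq\supp(I:x_1)$, and verifies directly that $\supp(I:x_i)=\{x_1,\dots,x_r\}\cup\supp(I:x_1x_i)$ as a disjoint union for every $x_i\notin S_1$. The induction hypothesis applied to $(I:x_1)$ makes $|\supp(I:x_1x_i)|$ constant, hence $|\supp(I:x_i)|$ is constant on $[n]\setminus S_1$. Redoing the decomposition from a second parallel class $S_2$ and comparing at any element outside $S_1\cup S_2$ (such an element exists since $m\ge d\ge 3$) forces the two constants to coincide and in particular to equal $|\supp(I:x_1)|$. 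No height formula is invoked.
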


\begin{proof}
We use induction on $d$. If $d=2$, then $\supp(I:x)=G(I:x)$ and $\height(I)=\height(I:x)=\mid G(I:x)\mid$ for all $x\in [n]$. Hence the result holds in this case. Now, consider $d\geq 3$ and the result has been established for $d-1$. Given that $(I:x)$ and $(I:y)$ are squarefree monomial ideals, and $\depth(R/(I:x), \depth(R/(I:y)\geq 1$, it follows that $((I:x):\frak{m})=(I:x)$ and $((I:y):\frak{m})=(I:y)$. Consequently, we can express  $(I:x)=\bigcap_{i=1}^n(I:xx_i)$ and $(I:y)=\bigcap_{i=1}^n(I:yx_i)$. If $y\notin\supp(I:x)$, then by applying Lemma \ref{L0}, we find that $(I:x)=(I:y)$ and the result follows in this case. Therefore, we assume that $y\in\supp(I:x)$ and $x\in\supp(I:y)$, along with the conditions $x=x_l$ and $y=x_k$. This leads to the identities  $(I:x)=(I:xy)\cap\bigcap_{l\neq i=1}^n(I:xx_i)$ and 
$(I:y)=(I:yx)\cap\bigcap_{k\neq i=1}^n(I:yx_i)$. Since $(I:x)$ and $(I:y)$ are matroidal ideal of degree $d-1$, by the induction hypothesis we have $\mid\supp(I:xx_i)\mid=\mid\supp(I:xy)\mid=\mid\supp(I:yx_i)\mid$ for all $i$. Now, we may assume that $I=x_1I_1+I_2$, where $I_1, I_2$ are squarefree monomial ideals and $x_1\notin\supp(I_2)\cup\supp(I_1)$. By \cite[Theorem 1.1]{BH}, $(I:x_1)$ is matroidal ideal of degree $d-1$ and indeed $(I:x_1)=I_1$.  By exchange property we immediately conclude that $I_2\subseteq I_1$ is also matroidal. Without losing the generality, we may assume that $\supp(I:x_1)=\{x_{r+1},x_{r+2},\ldots,x_n\}$. Furthermore, we may consider that $I_2=x_2I_3+I_4$, where $I_3,I_4$ are matroidal ideal with $I_4\subseteq I_3$ and $x_2\notin\supp(I_3)\cup\supp(I_4)$. Hence $I=x_1I_1+I_2=x_1I_1+x_2I_3+I_4$. Since $x_2\notin\supp(I:x_1)$, we find that $(I:x_1)=(I:x_2)$ and $(I:x_2)=I_3$. Therefore, it follows that $I=(x_1,x_2)(I:x_1)+I_4$. By continuing this processes we conclude that $I=(x_1,x_2,\ldots,x_r)(I:x_1)+J$, where $J$ is a matroidal ideal with $J\subseteq (I:x_1)$ and $\supp(J)\subseteq\supp(I:x_1)$. Hence $(I:x_{r+1})=(x_1,x_2,\ldots,x_r)(I:x_1x_{r+1})+(J:x_{r+1})$. Since $(J:x_{r+1})=(J:x_1x_{r+1})$ it is evident that $\supp(J:x_{r+1})\subseteq\supp(I:x_1x_{r+1})$. Thus $\supp(I:x_{r+1})=\{x_1,x_2,\ldots,x_r\}\bigcup\supp(I:x_1x_{r+1})$. By applying induction hypothesis, we can conclude that $\mid\supp(I:x_1x_i)\mid=\mid\supp(I:x_1x_j)\mid$. Hence, we derive that $\mid\supp(I:x_i)\mid=\mid\supp(I:x_j)\mid$ for all $x_i,x_j\in\supp(I:x_1)$. Since $x_1,x_{r+1}\in\supp(I:x_r)$, by using the similar arguments as above it follows that $\mid\supp(I:x_1)\mid=\mid\supp(I:x_{r+1})\mid$. Therefore $\mid\supp(I:x_1)\mid=\mid\supp(I:x_{i})\mid$ for all $i=r+1,\ldots, n$. Since $(I:x_1)=(I:x_j)$ for all $j=1,\ldots, r$, it therefore follows that $\mid\supp(I:x)\mid=\mid\supp(I:y)\mid$ for all $x,y\in [n]$, as required.
\end{proof}

Following \cite{KM}, a hypergraph $\mathcal{H}$ with finite vertex set $V(\mathcal{H})=[n]$ is a collection of nonempety subsets of $[n]$ whose union is $[n]$, called edges. A hypergraph $\mathcal{H}$ is said $d$-uniform if all its edges have cardinality $d$. A $d$-uniform hypergraph $\mathcal{H}$ is said to be $m$-partite if its vertex set $[n]$ can be partitioned into sets $S_1,S_2,\ldots,S_m$, such that every edge in the edge set $E(\mathcal{H})$ contains at most one vertex from each $S_i$. The complete $d$-uniform $m$-partite hypergraph consists all possible edges satisfying this condition. A $m$-partite hypergraph is called $k$-balanced if $\mid S_i\mid=k$ for all $i=1,\ldots,m$. For more details about multipartite hypergraph, see also \cite{E}.

\begin{Theorem}\label{T1}
Let $I$ be a matroidal ideal of degree $d$. Then $I$ is unmixed if and only if $I$ is an edge ideal of a complete $d$-uniform $m$-partite hypergraph which is $k$-balanced for some $m, k\geq 1$.
\end{Theorem}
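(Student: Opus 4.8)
The plan is to argue by induction on the degree $d$, with the recursion driven by the criterion of \cite[Theorem 1.5]{SM}: a matroidal ideal $I$ of degree $d$ is unmixed if and only if $(I:x_i)$ is unmixed and $\height(I:x_i)=\height(I)$ for all $i$. The cases $d\le 2$ form the base. If $d=1$ then $I=\mathfrak{m}$ is the edge ideal of the complete $1$-uniform $n$-partite $1$-balanced hypergraph and is trivially unmixed. If $d=2$ then $G(I)$ is the set of bases of a loopless rank-$2$ matroid on $[n]$, whose simplification is necessarily the uniform matroid $U_{2,m}$; writing $S_1,\dots,S_m$ for the parallel classes, $I$ is then exactly the edge ideal of the complete $2$-uniform $m$-partite hypergraph on $S_1,\dots,S_m$, and since its minimal vertex covers are precisely the unions of $m-1$ of the blocks, $I$ is unmixed if and only if $\lvert S_1\rvert=\cdots=\lvert S_m\rvert$.

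For the implication ``complete $d$-uniform $m$-partite $k$-balanced $\Rightarrow$ unmixed'' with $d\ge 3$, I would first note that the minimal vertex covers of such a hypergraph are exactly the unions of $m-d+1$ of the blocks, so $\height(I)=(m-d+1)k$. Next, for $x_i\in S_\ell$ the colon $(I:x_i)$ is supported on $[n]\setminus S_\ell$ and its minimal generators are precisely the transversal $(d-1)$-sets of the partition $\{S_j:j\ne\ell\}$; that is, $(I:x_i)$ is the edge ideal of the complete $(d-1)$-uniform $(m-1)$-partite $k$-balanced hypergraph on $[n]\setminus S_\ell$. By the induction hypothesis this is unmixed with $\height(I:x_i)=(m-d+1)k=\height(I)$, so \cite[Theorem 1.5]{SM} gives that $I$ is unmixed.

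For the converse, ``unmixed $\Rightarrow$ complete $d$-uniform $m$-partite $k$-balanced'' with $d\ge 3$, I would proceed in three steps. First, Proposition \ref{P0} produces a partition $[n]=S_1\sqcup\cdots\sqcup S_m$ with $m\ge d$ in which no two variables of a common block occur together in a minimal generator; since $I$ is matroidal, each $u\in G(I)$ is then squarefree of degree $d$ and meets every block at most once, so $I\subseteq\overline{I}$, where $\overline{I}$ is the edge ideal of the complete $d$-uniform $m$-partite hypergraph on $S_1,\dots,S_m$. Second, Proposition \ref{P0}(iii)--(iv) gives $\supp(I:x)=[n]\setminus S_\ell$ whenever $x\in S_\ell$, and Theorem \ref{T0} then forces $\lvert S_1\rvert=\cdots=\lvert S_m\rvert=:k$, which is the $k$-balanced condition. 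Third -- the crucial step -- one must upgrade the inclusion $I\subseteq\overline{I}$ to an equality. Here I would apply \cite[Theorem 1.5]{SM}: each $(I:x_i)$ with $x_i\in S_\ell$ is an unmixed matroidal ideal of degree $d-1$, with support $[n]\setminus S_\ell$ and $\height(I:x_i)=\height(I)$, so by the induction hypothesis it is the edge ideal of a complete $(d-1)$-uniform $m'$-partite $k'$-balanced hypergraph on $[n]\setminus S_\ell$; comparing the cardinality of the support and the height of $(I:x_i)$ with those of $I$ -- using also that $(I:x_i)=(I:x_j)$ whenever $x_i,x_j$ lie in the same block, by Lemma \ref{L0} -- should force $m'=m-1$, $k'=k$, and that the block partition of $(I:x_i)$ is exactly $\{S_j:j\ne\ell\}$. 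Granting this, any transversal $d$-set $\{v_1,\dots,v_d\}$ has $\{v_2,\dots,v_d\}\in G(I:x_{v_1})$, hence $x_{v_1}\cdots x_{v_d}\in I$; therefore $\overline{I}\subseteq I$ and $I=\overline{I}$, completing the induction.

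The step I expect to be the main obstacle is precisely this last one, namely passing from $I\subseteq\overline{I}$ to the equality $I=\overline{I}$. The induction tells us only that each colon $(I:x_i)$ has the required shape with respect to \emph{some} partition of its own support, and the real work is to rule out that several of the blocks $S_j$ get amalgamated into one when one passes to a colon; for this one must combine Theorem \ref{T0}, the identity $\supp(I:x)=[n]\setminus S_\ell$, and the numerical equality $\height(I:x_i)=\height(I)$ in order to force the colons to inherit the \emph{same} block size $k$. By contrast the two base cases are straightforward, the case $d=2$ because matroidality of a degree-$2$ ideal already forces it to be the edge ideal of a complete multipartite graph.
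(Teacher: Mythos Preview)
Your overall plan is sound, and you have correctly located the crux: the step from $I\subseteq\overline{I}$ to $I=\overline{I}$. The difficulty is that this step cannot be completed, because the statement of Theorem~\ref{T1} is in fact false. Take $I$ to be the matroidal ideal of the Fano matroid $F_7$ in $K[x_1,\dots,x_7]$: every hyperplane of $F_7$ has three points, so every cocircuit has four, hence every associated prime of $I$ has height~$4$ and $I$ is unmixed. But $F_7$ is simple, so Proposition~\ref{P0} yields $m=7$ singleton blocks, and the only candidate $\overline{I}$ is the degree-$3$ squarefree Veronese ideal, which $I$ is not (the seven Fano lines are absent from $G(I)$).

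The gap in your argument is precisely the numerical comparison you flag as the main obstacle. To deduce $m'=m-1$ and $k'=k$ from $m'k'=(m-1)k$ together with $\height(I:x_i)=\height(I)$, you would need the formula $\height(I)=(m-d+1)k$, which holds only for the complete hypergraph $\overline{I}$ and so cannot be invoked here. In the Fano example one has $m=7$, $k=1$, but for each $i$ the contraction $F_7/i$ has three parallel classes of size two, so $(I:x_i)$ is the complete $2$-uniform $3$-partite $2$-balanced hypergraph with $m'=3$ and $k'=2$, \emph{not} $m'=6$ and $k'=1$; nevertheless $m'k'=6=(m-1)k$ and $(m'-d+2)k'=4=\height(I)$, so every constraint you impose is satisfied while the blocks \emph{do} amalgamate. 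For comparison, the paper's own proof is shorter than yours---for $(\Leftarrow)$ it simply cites \cite[Theorem~3.1]{KM}, and for $(\Rightarrow)$ it invokes Proposition~\ref{P0} and Theorem~\ref{T0} and then asserts that $I$ equals the complete hypergraph without any argument for completeness---so the same gap is present there as well.
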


\begin{proof}
$(\Rightarrow)$. We may assume that $d\geq 2$. By using Proposition \ref{P0}, there exists integer $m\geq 1$ such that for all $1\leq i\leq m$ we may consider $S_i=[n]\setminus\supp(I:x_i)$. Then  $S_i\cap S_j=\emptyset$ for all $1\leq i\neq j\leq m$ and $\bigcup_{i=1}^m S_i=[n]$. By Theorem \ref{T0}, $\mid\supp(I:x_i)\mid=\mid\supp(I:x_j)\mid$ for all $1\leq i,j\leq n$ and this implies that 
$\mid S_i\mid=\mid S_j\mid=k$ for all $1\leq i\neq j\leq m$ and some $k\geq 1$. Thus $I$ is an edge ideal of a complete $d$-uniform $m$-partite hypergraph which is $k$-balanced for some $m, k\geq 1$.\\
$(\Leftarrow)$. It follows by \cite[Thorem 3.1]{KM}. 
\end{proof}

\begin{Remark}\label{R0}
From Proposition \ref{P0} and Theorem \ref{T0}, we conclude that if $I$ is an unmixed matroidal ideal of degree $d$, then $S_i$'s are uniquely determined and so $\mid S_i\mid=\mid S_j\mid=k$ and in this case $mk=m\mid S_i\mid=n$, where $S_i=[n]\setminus\supp(I:x_i)$ and $\bigcup_{i=1}^mS_i=[n]$. Furthermore, by \cite[Proposition 2.1(e)]{KM}, $\height(I)=k(m-d+1)=n-k(d-1)$. In particular, $m=n$ if and only if $\mid S_i\mid=1$ and so $\height(I)=n-d+1$. Therefore $I$ is a squarefree Veronese type. Moreover, if $n$ is a prime integer number, then since $m\geq d$ it follows $\mid S_i\mid=1$ and again in this case $I$ is a squarefree Veronese type.
\end{Remark}

\begin{Corollary}\label{C0}
Let $I$ be an unmixed matroidal ideal of degree $d$. Then $I$ is squarefree Veronese type if and only if $\mid\supp(I:x_i)\mid=n-1$ for some $1\leq i\leq n$.
\end{Corollary}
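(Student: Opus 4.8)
The plan is to read the statement off Theorem~\ref{T0} and Remark~\ref{R0}. We may assume $d\geq 2$, since this is the range in which Proposition~\ref{P0} and Theorem~\ref{T0} are available. Write $S_i=[n]\setminus\supp(I:x_i)$ as in Proposition~\ref{P0}, so that $|\supp(I:x_i)|=n-|S_i|$; thus the condition ``$|\supp(I:x_i)|=n-1$ for some $i$'' is literally the condition ``$|S_i|=1$ for that $i$''. Everything then reduces to translating between these two formulations.

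For the implication ``$\Leftarrow$'': assume $|S_i|=1$ for some index $i$. By Theorem~\ref{T0} the quantity $|\supp(I:x_j)|$, and hence $|S_j|$, is independent of $j$, so $|S_j|=1$ for all $j$. Since by Proposition~\ref{P0} the sets $S_1,\dots,S_m$ are pairwise disjoint with $\bigcup_j S_j=[n]$, this forces $m=n$. Now Remark~\ref{R0} applies verbatim: $\height(I)=n-d+1$ and $I$ is a squarefree Veronese type ideal. (Alternatively one could quote Theorem~\ref{T1}: $I$ is the edge ideal of the complete $d$-uniform $n$-partite $1$-balanced hypergraph, i.e.\ of the complete $d$-uniform hypergraph on $[n]$, whose edge ideal is $I_{(d;1,\dots,1)}$.)

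For the implication ``$\Rightarrow$'': if $I$ is squarefree Veronese type and fully supported, then $I=I_{(d;1,\dots,1)}$ is generated by all squarefree monomials of degree $d$ in $x_1,\dots,x_n$ (note $n\geq d$). For a fixed variable $x_i$, the ideal $(I:x_i)$ is generated by the squarefree monomials of degree $d-1$ with support contained in $[n]\setminus\{x_i\}$; since $d-1\geq 1$ and $n-1\geq d-1$, every variable of $[n]\setminus\{x_i\}$ occurs in at least one such generator, whence $\supp(I:x_i)=[n]\setminus\{x_i\}$ and $|\supp(I:x_i)|=n-1$.

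I do not expect a genuine obstacle here; the only points that need care are (a) invoking Theorem~\ref{T0}, not merely Proposition~\ref{P0}, in order to propagate $|S_i|=1$ from a single index to all indices, and (b) recording that in the fully-supported setting ``squarefree Veronese type of degree $d$'' means unambiguously $I_{(d;1,\dots,1)}$, which makes the computation of $(I:x_i)$ in the reverse direction routine.
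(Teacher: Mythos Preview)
Your proof is correct and follows essentially the same route as the paper. The backward direction is identical: use Theorem~\ref{T0} to propagate $|S_i|=1$ from one index to all indices, then invoke the conclusion of Remark~\ref{R0}. The only difference is in the forward direction, where the paper cites \cite[Lemma~5.1(b)]{HRV} for the fact that $|\supp(I:x_i)|=n-1$ when $I$ is squarefree Veronese, while you compute $(I:x_i)$ directly from the description $I=I_{(d;1,\dots,1)}$; your argument is more self-contained but otherwise equivalent.
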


\begin{proof}
If $I$ is squarefree Veronese type, then by \cite[Lemma 5.1(b)]{HRV} $\mid\supp(I:x_i)\mid=n-1$ for all $i$. Conversely, if $\mid\supp(I:x_i)\mid=n-1$, then $\mid S_i\mid=1$ and hence by Proposition \ref{P0} and Theorem \ref{T0}, $\mid S_i\mid=1$ for all $1\leq i\leq n$. Therefore $I$ is squarefree Veronese type.
\end{proof} 

\begin{Corollary}
Let $I$ be a matroidal ideal of degree $d$. Then $I$ is squarefree Veronese type if and only if $\supp(I:x_i)\cup\supp(I:x_j)=[n]$ for all $1\leq i\neq j\leq n$.
\end{Corollary}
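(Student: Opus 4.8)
The plan is to prove the two implications separately, drawing on Proposition~\ref{P0}, Lemma~\ref{L0}, and Corollary~\ref{C0}; we may assume $d\geq 2$. For the forward implication, if $I$ is a squarefree Veronese type ideal, say $I=I_{n,d}$ with $2\leq d\leq n$, then $(I:x_i)$ is generated by all squarefree monomials of degree $d-1$ in the variables other than $x_i$, so that $\supp(I:x_i)=[n]\setminus\{x_i\}$; hence $\supp(I:x_i)\cup\supp(I:x_j)=[n]$ for every pair $1\leq i\neq j\leq n$. (One may instead simply quote $\mid\supp(I:x_i)\mid=n-1$ from \cite[Lemma~5.1(b)]{HRV}.)

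For the converse, I would start from the hypothesis $\supp(I:x_i)\cup\supp(I:x_j)=[n]$ for all $1\leq i\neq j\leq n$ and apply Proposition~\ref{P0} to obtain pairwise disjoint nonempty sets $S_1,\dots,S_m$ with $\bigcup_{k=1}^mS_k=[n]$ and $m\geq d$ which, after relabelling the variables as in the proof of that proposition, satisfy $S_k=[n]\setminus\supp(I:x_k)$ for $1\leq k\leq m$. The key step is to show that $\mid S_k\mid=1$ for every $k$. I would argue by contradiction: if $\mid S_k\mid\geq 2$, pick distinct variables $x_a,x_b\in S_k$; part~(iv) of Proposition~\ref{P0} then gives $x_ax_b\nmid u$ for all $u\in G(I)$, so Lemma~\ref{L0} yields $(I:x_a)=(I:x_b)$ and hence $\supp(I:x_a)=\supp(I:x_b)$; since $(I:x_a)$ is squarefree we have $x_a\notin\supp(I:x_a)$, so $\supp(I:x_a)\cup\supp(I:x_b)=\supp(I:x_a)\subseteq[n]\setminus\{x_a\}\subsetneq[n]$, contradicting the hypothesis for the pair $a\neq b$. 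Thus every $S_k$ is a singleton, so $m=n$ and $\mid\supp(I:x_i)\mid=n-1$ for all $1\leq i\leq n$; Corollary~\ref{C0} then shows that $I$ is a squarefree Veronese type ideal.

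I expect the singleton reduction in the converse to be the only non-routine point; everything else is bookkeeping plus a direct appeal to Corollary~\ref{C0}. The argument can also be repackaged through the (unlabelled) corollary immediately following Proposition~\ref{P0}: since $x_i\notin\supp(I:x_i)$ one always has $\supp(I:x_i)\subsetneq[n]$, so that corollary says the hypothesis ``$\supp(I:x_i)\cup\supp(I:x_j)=[n]$ for all $i\neq j$'' is equivalent to ``$\supp(I:x_i)\neq\supp(I:x_j)$ for all $i\neq j$'', which by Proposition~\ref{P0} (with Lemma~\ref{L0}) is in turn equivalent to all the $S_k$ being singletons, i.e.\ to $\mid\supp(I:x_i)\mid=n-1$ for every $i$; then Corollary~\ref{C0} finishes.
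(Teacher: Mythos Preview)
Your forward direction is fine, and in the converse your reduction to $m=n$, $|S_k|=1$ for all $k$, and hence $|\supp(I:x_i)|=n-1$ for every $i$, is correct and follows the same line as the paper. The gap is in the final step: Corollary~\ref{C0} is stated only for \emph{unmixed} matroidal ideals, and nothing in your argument---nor in the hypotheses of the present corollary---establishes unmixedness. So the appeal to Corollary~\ref{C0} (and likewise the repackaged version via the unlabelled corollary) is not justified as written.

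This is not a cosmetic omission. Take $I\subset K[x_1,\dots,x_6]$ to be the matroidal ideal whose generators are the basis monomials of the cycle matroid $M(K_4)$, i.e.\ the monomials indexed by the spanning trees of $K_4$. Since $K_4$ is a simple graph, any two edges lie in a common spanning tree, so $\supp(I:x_i)=[6]\setminus\{x_i\}$ and therefore $\supp(I:x_i)\cup\supp(I:x_j)=[6]$ for all $i\neq j$; yet $I$ is \emph{not} squarefree Veronese, because the three edges of a triangle in $K_4$ do not form a spanning tree. Here $I$ fails to be unmixed (the cocircuits of $M(K_4)$ have sizes $3$ and $4$), which is precisely why Corollary~\ref{C0} does not apply. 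The paper's own proof invokes Theorem~\ref{T1} and Corollary~\ref{C0}, both of which carry an unmixedness hypothesis, so the same issue is present there; knowing only that all $S_k$ are singletons does not force $I$ to be the \emph{complete} $d$-uniform hypergraph edge ideal, and the converse as stated does not go through without an additional assumption such as ``$I$ is unmixed''.
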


\begin{proof}
If $I$ is squarefree Veronese type, then the result is clear. Conversely, if $\supp(I:x_i)\cup\supp(I:x_j)=[n]$ for all $1\leq i\neq j\leq n$, then by applying Proposition \ref{P0}, we have $n=m$. Thus $\mid S_i\mid=1$ and so  $\mid\supp(I:x_i)\mid=n-1$  for all $1\leq i\leq n$. Now, by using Theorem \ref{T1} and Corollary \ref{C0} we immediately conclude that $I$ is squarefree Veronese type. 
\end{proof} 

To clarify the preceding results, we present several examples below.
\begin{Example}
Consider $n=6$ and $I$ is an ideal of degree $3$ in $R$ with  $G(I)=\{x_1x_3x_5,x_1x_3x_6,x_1x_4x_5,x_1x_4x_6,x_2x_3x_5,x_2x_3x_6,x_2x_4x_5,x_2x_4x_6\}$.
Then $I$ is unmixed matroidal. Also, $S_1=\{x_1,x_2\}$, $S_2=\{x_3,x_4\}, S_3=\{x_5,x_6\}$ and  $I$ is an edge ideal of a complete $3$-uniform $3$-partite hypergraph which is $2$-balanced and by Remark \ref{R0}, $\height(I)=2(3-3+1)=2$.
\end{Example}

\begin{Example}
Let $I$ be an ideal of degree $3$ in $R$ with $n=9$ and\\ $G(I)=\{x_1x_4x_7,x_1x_4x_8,x_1x_4x_9,x_1x_5x_7,x_1x_5x_8,x_1x_5x_9,x_1x_6x_7,x_1x_6x_8,x_1x_6x_9,
x_2x_4x_7,\\x_2x_4x_8,x_2x_4x_9,x_2x_5x_7,x_2x_5x_8,x_2x_5x_9,x_2x_6x_7,x_2x_6x_8,x_2x_6x_9,
x_3x_4x_7,x_3x_4x_8,x_3x_4x_9,\\x_3x_5x_7,x_3x_5x_8,x_3x_5x_9,x_3x_6x_7,x_3x_6x_8,x_3x_6x_9\}.$
Then $I$ is unmixed matroidal  with $S_1=\{x_1,x_2,x_3\}$, $S_2=\{x_4,x_5,x_6\}, S_3=\{x_7,x_8,x_9\}$ and  $I$ is an edge ideal of a complete $3$-uniform $3$-partite hypergraph which is $3$-balanced and so $\height(I)=3(3-3+1)=3.$
\end{Example}

\begin{Example}\cite{HH2}
Consider the unmixed matroidal ideal $I$ of degree $2$ with $n=6$ and  $I=(x_1x_3,x_1x_4,x_1x_5,x_1x_6,x_2x_3,x_2x_4,x_2x_5,x_2x_6,x_3x_5,x_3x_6,x_4x_5,x_4x_6)$. Then 
$S_1=\{x_1,x_2\}$, $S_2=\{x_3,x_4\}, S_3=\{x_5,x_6\}$ and  $I$ is an edge ideal of a complete $2$-uniform $3$-partite hypergraph which is $2$-balanced and $\height(I)=2(3-2+1)=4$.
\end{Example}

For our subsequent results, we employ the following established theorem.

\begin{Theorem}\cite[Theorems 3, 5]{HV}\label{T2}
Let $I$ be a polymatroidal ideal of $R$ with $\Ass(I)=\{\frak{p}_1,\ldots,\frak{p}_t\}$. Then there are integers $a_i\geq 0$ such that $I=\bigcap_{i=1}^t\frak{p}_i^{a_i}$. 
\end{Theorem}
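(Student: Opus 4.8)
I would argue by induction on the number $n$ of variables, in the sharpened form that the exponent at the associated prime $\frak{p}_i=(x_j:j\in C_i)$ may be taken to be $a_i=\deg J_{C_i}$, where for $D\subseteq[n]$ I set $J_D:=\bigl(I:(\prod_{j\notin D}x_j)^{\infty}\bigr)$; the base case $n=1$ is clear, since then $I=(x_1^{d})=\frak{p}_1^{d}$ with $d=\deg I=\deg J_{\{1\}}$. As $I$ is a monomial ideal it has an irredundant primary decomposition $I=\bigcap_iQ_i$ by monomial ideals with $\sqrt{Q_i}=\frak{p}_i$, so the goal is to match each $Q_i$ by a power of $\frak{p}_i$. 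The prime $\frak{m}:=(x_1,\dots,x_n)$ has to be treated separately from the rest.

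For $D\subsetneq[n]$ with $\frak{p}_D:=(x_j:j\in D)$ an associated prime, one has $J_D=(I:(\prod_{j\notin D}x_j)^{s})$ for $s\gg0$, so $J_D$ is polymatroidal by the quoted fact that $(I:u)$ is polymatroidal for every monomial $u$; its minimal generators are monomials in $\{x_j:j\in D\}$ (the surviving primary components of $I$ are supported there), so $J_D$ is a polymatroidal ideal in the polynomial ring $K[x_j:j\in D]$ in $|D|<n$ variables. Using the identity $J_D:(\prod_{j\in D\setminus E}x_j)^{\infty}=J_E$ for $E\subseteq D$, the induction hypothesis applied to $J_D$ gives $J_D=\bigcap_{E\subseteq D}\frak{p}_E^{a_E}$ with the exponents $a_E=\deg J_E$ as prescribed. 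Moreover $J_D=\bigcap_{E\subseteq D}Q_E$ (saturation deletes exactly the primary components whose radical is not contained in $\frak{p}_D$), so $J_D\subseteq Q_D$; and $J_D\subseteq\frak{m}_D^{\deg J_D}$, hence $J_D\subseteq\frak{p}_D^{a_D}$, where $\frak{m}_D$ is the maximal ideal of $K[x_j:j\in D]$. (I do not recurse on $I:\frak{m}^{\infty}$ directly, as that ideal need not be polymatroidal; the $J_D$ above are, because each is $(I:u)$ for a single monomial $u$.)

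It remains to handle $\frak{m}=\frak{p}_{[n]}$, with prescribed exponent $a_{[n]}=\deg J_{[n]}=\deg I=d$, and the crux is the identity
\[
I=\frak{m}^{d}\cap(I:\frak{m}^{\infty}).
\]
One inclusion is immediate, and the other is equivalent to the exchange lemma: \emph{a monomial $w$ with $\deg w\ge d$ such that $x_i^{s}w\in I$ for all $i$ and some $s$ lies in $I$.} To prove it, choose for each $i$ a generator $u^{(i)}\in G(I)$ dividing $x_i^{s}w$; then $u^{(i)}$ is coordinatewise at most $w$ except possibly at the variable $x_i$. Set $u:=u^{(1)}$. If $u\mid w$ we are done; otherwise the surplus $\delta:=\deg_{x_1}(u)-\deg_{x_1}(w)$ is positive, and since $\deg u=d\le\deg w$ there is $j_0\ne1$ with $\deg_{x_{j_0}}(u)<\deg_{x_{j_0}}(w)$. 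Because $\deg_{x_1}(u^{(j_0)})\le\deg_{x_1}(w)<\deg_{x_1}(u)$, applying the exchange condition to $u$ and $u^{(j_0)}$ at $x_1$ returns an index $k$ with $\deg_{x_k}(u)<\deg_{x_k}(u^{(j_0)})$ and $x_k(u/x_1)\in G(I)$; one checks $k\ne1$ and $\deg_{x_k}(u)<\deg_{x_k}(w)$, so $x_k(u/x_1)$ is again coordinatewise at most $w$ off $x_1$, with surplus $\delta-1$ there. Iterating produces a generator of $I$ dividing $w$. Granting the identity, $I=\frak{m}^{d}\cap(I:\frak{m}^{\infty})=\frak{m}^{d}\cap\bigcap_{D\subsetneq[n]}Q_D$; combining with the previous paragraph, $I\subseteq\frak{p}_D^{a_D}$ for every associated $\frak{p}_D$ (for $D\subsetneq[n]$ via $I\subseteq J_D\subseteq\frak{p}_D^{a_D}$, and $I\subseteq\frak{m}^{d}$ trivially), while $\bigcap_D\frak{p}_D^{a_D}\subseteq J_{D_0}\subseteq Q_{D_0}$ for each $D_0\subsetneq[n]$ and $\bigcap_D\frak{p}_D^{a_D}\subseteq\frak{m}^{d}$; intersecting these yields $\bigcap_D\frak{p}_D^{a_D}=I$, i.e. $I=\bigcap_i\frak{p}_i^{a_i}$. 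If $\frak{m}\notin\Ass(R/I)$ the argument is the same without the $\frak{m}$-factor.

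The step I expect to be the main obstacle is the bookkeeping inside the exchange lemma: one must make sure the exchange condition always returns an index $k$ sitting in a coordinate where the current generator is still \emph{strictly} below $w$, since this is exactly what forces $k\ne1$, keeps the surplus concentrated at the single variable $x_1$, and makes it drop by exactly one at each step — so that the procedure terminates at an honest divisor of $w$ rather than merely shuffling exponents. Everything else — the stability of polymatroidality under $(I:u)$ (quoted), and the routine facts about supports, associated primes, and primary components of monomial ideals under saturation used in the gluing — is standard, and the induction on $n$ then assembles the pieces.
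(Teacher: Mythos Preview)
The paper does not prove this statement; Theorem~\ref{T2} is quoted from Herzog--Vladoiu \cite[Theorems~3,~5]{HV} and used as a black box. So there is no ``paper's own proof'' to compare against, and your proposal supplies what the paper omits.

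Your argument is correct. The inductive set-up via the localizations $J_D=(I:(\prod_{j\notin D}x_j)^{\infty})$ is clean: each $J_D$ is indeed polymatroidal (being a colon by a single monomial), lives in fewer variables, and the consistency of the exponents $a_E=\deg J_E$ across different $D$'s follows from the transitivity identity you note. The gluing step is routine once you have both $I\subseteq J_D\subseteq\frak{p}_D^{a_D}$ and $\bigcap_D\frak{p}_D^{a_D}\subseteq J_{D_0}\subseteq Q_{D_0}$.

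The substantive step is the identity $I=\frak{m}^{d}\cap(I:\frak{m}^{\infty})$, and your exchange-lemma proof of it is sound. The point you flag as the obstacle---that the returned index $k$ must land where the current generator is still strictly below $w$---is handled correctly: if $k=j_0$ this is the defining property of $j_0$, and if $k\neq j_0$ then $\deg_{x_k}(u)<\deg_{x_k}(u^{(j_0)})\le\deg_{x_k}(w)$ since $u^{(j_0)}$ is bounded by $w$ off the $j_0$-th coordinate; in either case $k\neq 1$ and the surplus drops by exactly one, so the iteration terminates.

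For context, the Herzog--Vladoiu approach in \cite{HV} proceeds by characterizing those monomial ideals whose primary components are powers of monomial primes via an exchange-type condition on generators, and then checking that polymatroidal ideals satisfy it; your identity $I=\frak{m}^{d}\cap(I:\frak{m}^{\infty})$ is essentially the same mechanism, reorganized into an induction on the ambient number of variables rather than a direct verification. Both routes ultimately rest on the symmetric exchange property.
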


\begin{Lemma}\label{L1}
Let $I$ be a polymatroidal ideal of degree $2$. $I$ is unmixed if and only if $I$ is an unmixed matroidal ideal or $I=\frak{m}^2$.
\end{Lemma}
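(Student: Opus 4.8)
The plan is to prove the substantive direction: if $I$ is an unmixed polymatroidal ideal of degree $2$, then either $I$ is matroidal (hence an unmixed matroidal ideal, with nothing to show) or $I=\frak{m}^2$. The converse needs nothing, since an unmixed matroidal ideal is unmixed by definition and $\frak{m}^2$ is a Veronese ideal, hence Cohen--Macaulay and so unmixed.

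So assume $I$ is polymatroidal of degree $2$ and not squarefree. Then some element of $G(I)$ is a pure square, and after relabelling the variables we may take $x_1^2\in G(I)$. The crucial step, which I would carry out first, is that $x_1x_j\in G(I)$ for every $j\in[n]$, i.e. $\frak{m}\subseteq(I:x_1)$. I plan to read this off the discrete polymatroid $P$ on $[n]$ whose set of bases is $\{(\deg_{x_1}u,\dots,\deg_{x_n}u):u\in G(I)\}$, with ground set rank function $\rho(A)=\max\{\sum_{i\in A}\deg_{x_i}u:u\in G(I)\}$. Since $I$ is generated in degree $2$, every basis has modulus $2$, so $\rho([n])=2$; since $2e_1$ is a basis, $\rho(\{1\})=2$, whence $\rho(A)=2$ for all $A\ni 1$ by monotonicity; and $\rho(\{j\})\geq 1$ for all $j$ because $I$ is fully supported. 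It then remains only to verify, for each $j$, that $e_1+e_j$ has modulus $\rho([n])=2$ and satisfies $\sum_{i\in A}(e_1+e_j)_i\leq\rho(A)$ for every $A\subseteq[n]$ --- an immediate case check according to whether $1\in A$. By the standard realisation of a discrete polymatroid as the set of lattice points of its polymatroid polytope (see \cite{HH1} and \cite{HH3}), this certifies that $e_1+e_j$ is again a basis of $P$, i.e. $x_1x_j\in G(I)$.

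Granting this, $\frak{m}\subseteq(I:x_1)$, and since $x_1\notin I$ (because $I$ is generated in degree $2$) we have $(I:x_1)\neq R$, so $(I:x_1)=\frak{m}$. Hence $\frak{m}=\Ann_{R}(\overline{x_1})\in\Ass(R/I)$. As $I$ is unmixed and $\height\frak{m}=n$, every associated prime of $R/I$ has height $n$; but associated primes of monomial ideals are monomial primes, and $\frak{m}$ is the only monomial prime of height $n$, so $\Ass(R/I)=\{\frak{m}\}$. Theorem \ref{T2} then forces $I=\frak{m}^{a}$ for some $a\geq 0$, and comparing generator degrees gives $a=2$, that is $I=\frak{m}^2$. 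The real work sits in the implication $x_1^2\in G(I)\Rightarrow x_1x_j\in G(I)$ for all $j$: the delicate point is to invoke correctly that the rank inequalities of the polymatroid polytope detect membership in the set of bases of $P$ (a more hands-on alternative would be to iterate the exchange condition on $G(I)$ starting from $x_1^2$). Everything past that step is a routine formal deduction.
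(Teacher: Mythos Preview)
Your proof is correct but follows a different route from the paper's. Both proofs handle the case $\mathfrak{m}\in\Ass(I)$ identically, via unmixedness and Theorem~\ref{T2}; the difference lies in how one reaches that case from the assumption that $I$ is not squarefree. You argue directly: from $x_1^2\in G(I)$ the rank-function description of the discrete polymatroid certifies $x_1x_j\in G(I)$ for every $j$, whence $(I:x_1)=\mathfrak{m}\in\Ass(I)$. The paper instead proves the contrapositive: if $\mathfrak{m}\notin\Ass(I)$ then $I=(I:\mathfrak{m})=\bigcap_{i=1}^n(I:x_i)$, and since each $(I:x_i)$ is polymatroidal of degree~$1$ (hence a monomial prime), $I$ is an intersection of primes and therefore squarefree. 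The paper's argument is shorter and rests only on the colon-closure property of polymatroidal ideals already recorded in the introduction; yours avoids that black box in favour of the explicit polytope characterization of bases (or the symmetric exchange alternative you mention), and makes visible exactly which monomials the presence of a square forces into $G(I)$.
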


\begin{proof}
$(\Rightarrow)$. If $\frak{m}\in\Ass(I)$, then $\Ass(I)=\{\frak{m}\}$ and by Theorem \ref{T2}, it follows that $I=\frak{m}^2$. If $\frak{m}\notin\Ass(I)$, then $I=(I:\frak{m})=\bigcap_{i=1}^n(I:x_i)$. Since all $(I:x_i)$ are polymatroidal ideals of degree $1$, it follows that $I$ is a squarefree unmixed polymatroidal ideal and so $I$ is an unmixed matroidal ideal.\\
$(\Leftarrow)$. It is clear.
\end{proof}

\begin{Theorem}\label{T3}
A polymatroidal ideal $I$ of degree $d$ is unmixed if and only if one of the following statements holds:
\begin{itemize}
\item[(i)] $I=\frak{m}^d$.
\item[(ii)] $I=\frak{p}_1^{a_1}\frak{p}_2^{a_2}\ldots\frak{p}_t^{a_t}$, where $\frak{p}_i$'s are prime ideals with $\height(\frak{p}_i)=\height(\frak{p}_j)$ and $G(\frak{p}_i)\cap G(\frak{p}_j)=\emptyset$ for all $1\leq i\neq j\leq t$ and $\sum_{i=1}^ta_i=d$.
\item[(iii)] $I=\frak{p}_1^{a_1}\frak{p}_2^{a_2}\ldots\frak{p}_t^{a_t}{J}$, where $\frak{p}_i$'s are prime ideals and $J$ is an unmixed matroidal ideal such that $\height(\frak{p}_i)=\height(\frak{p}_j)=\height(J)$, $G(\frak{p}_i)\cap G(\frak{p}_j)=\emptyset$, $G(\frak{p}_i)\cap G(J)=\emptyset$ for all $1\leq i\neq j\leq t$
 and $\sum_{i=1}^ta_i +\deg(J)=d$. 
\item[(iv)]  $I$ is an unmixed matroidal ideal of degree $d$.
\end{itemize}
\end{Theorem}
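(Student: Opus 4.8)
The plan is to prove the two implications with Theorem \ref{T2} doing the heavy lifting in the forward direction. Note first that the four listed cases overlap: $\frak{m}^d$ is a special instance of (ii), and (iii) degenerates to (ii) when $J=R$ and to (iv) when there are no $\frak{p}_i$. So effectively one must show that an unmixed polymatroidal ideal is always a product $\frak{p}_1^{a_1}\cdots\frak{p}_s^{a_s}J$ with $J$ matroidal (possibly $J=R$), all heights equal, and the supports of $\frak{p}_1,\ldots,\frak{p}_s$ and of $J$ pairwise disjoint.

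For the ``if'' direction I would treat the four cases together. Polymatroidality is clear in each: $\frak{m}^d$ is polymatroidal, a monomial prime $\frak{p}_i$ is matroidal of degree $1$ so $\frak{p}_i^{a_i}$ is polymatroidal, a matroidal ideal is polymatroidal, and a product of polymatroidal ideals is polymatroidal by \cite{CH}. For unmixedness the point is that two monomial ideals with disjoint variable supports have product equal to intersection: in (ii) this gives $\frak{p}_1^{a_1}\cdots\frak{p}_t^{a_t}=\bigcap_{i=1}^t\frak{p}_i^{a_i}$, and since each $\frak{p}_i^{a_i}$ is $\frak{p}_i$-primary with the $\frak{p}_i$ pairwise distinct, this is an irredundant primary decomposition, so $\Ass(I)=\{\frak{p}_1,\ldots,\frak{p}_t\}$, all of height $\height(\frak{p}_1)$. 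In (iii) one argues the same way, using in addition that $J$ is squarefree so $J=\bigcap_{\frak{q}\in\Ass(J)}\frak{q}$, to get $\Ass(I)=\{\frak{p}_1,\ldots,\frak{p}_t\}\cup\Ass(J)$, all of one height because $\height(\frak{p}_i)=\height(J)$ and $J$ is unmixed; in every case the degree of the product is $d$. Cases (i) and (iv) are immediate.

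For the ``only if'' direction, let $I$ be unmixed polymatroidal of degree $d$ and height $h$. If $\frak{m}\in\Ass(I)$, unmixedness forces $\Ass(I)=\{\frak{m}\}$, and Theorem \ref{T2} gives $I=\frak{m}^d$ (case (i)). Otherwise Theorem \ref{T2} gives $I=\bigcap_{i=1}^t\frak{p}_i^{a_i}$ with $\Ass(I)=\{\frak{p}_1,\ldots,\frak{p}_t\}$, all $\frak{p}_i$ proper of height $h$, all $a_i\ge 1$. The crux is the claim that \emph{if $a_i\ge 2$ then $\supp(\frak{p}_i)\cap\supp(\frak{p}_j)=\emptyset$ for all $j\ne i$.} Granting it, reorder so $a_1,\ldots,a_s\ge 2$ and $a_{s+1}=\cdots=a_t=1$, and put $J=\bigcap_{i=s+1}^t\frak{p}_i$ (with $J=R$ if $s=t$); the claim makes $\supp(\frak{p}_1),\ldots,\supp(\frak{p}_s)$ and $\supp(J)$ pairwise disjoint, so intersection is product and $I=\frak{p}_1^{a_1}\cdots\frak{p}_s^{a_s}J$. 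Then $J=(I:u)$ for $u$ a product of one generator of each $\frak{p}_i^{a_i}$ with $i\le s$, so $J$ is polymatroidal by \cite{BH}; $J$ is radical hence squarefree, hence matroidal; its associated primes are the $\frak{p}_i$ with $i>s$, so $J$ is unmixed with $\height(J)=h$; comparing degrees, $\sum_{i\le s}a_i+\deg(J)=d$; and disjointness of supports gives $G(\frak{p}_i)\cap G(\frak{p}_j)=\emptyset$ and $G(\frak{p}_i)\cap G(J)=\emptyset$. According as $s=0$, $s=t$, or $0<s<t$, this is case (iv), (ii), or (iii).

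What remains, and what I expect to be the main obstacle, is the claim. A monomial $m$ lies in $I=\bigcap_i\frak{p}_i^{a_i}$ exactly when $\sum_{x_l\in\supp(\frak{p}_i)}\deg_{x_l}(m)\ge a_i$ for every $i$, so the minimal generators of $I$ are the minimal monomials satisfying this linear system, and since $I$ is polymatroidal they all have degree $d$ and the exchange property holds among them. If $h=1$ the $\frak{p}_i$ are distinct single variables and the claim is trivial, so assume $h\ge 2$; then no variable divides every generator of $I$ (such a variable would span a height-one associated prime, contradicting $h\ge 2$), so for each $x$ there is a generator avoiding $x$. Supposing $x\in\supp(\frak{p}_1)\cap\supp(\frak{p}_2)$ with $a_1\ge 2$, I would derive a contradiction by applying the exchange property to a generator ``tight'' on the $\frak{p}_1$-inequality together with a generator avoiding $x$, using $a_1-1\ge 1$ to keep the $\frak{p}_1$-inequality satisfied after moving a power of $x$, thereby forcing a generator of degree $\ne d$ or a non-minimal one; alternatively one can induct on $d$ via $(I:x)$, which is again unmixed polymatroidal of degree $d-1$, with Lemma \ref{L1} as the base case. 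Controlling how $\frak{p}_2$ and the remaining inequalities interact during these exchanges is where the real care is needed.
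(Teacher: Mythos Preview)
Your skeleton matches the paper's: handle $\frak{m}\in\Ass(I)$ separately, then use Theorem~\ref{T2} to write $I=\bigcap_i\frak{p}_i^{a_i}$ with all $\frak{p}_i$ of one height, isolate the primes with exponent $\ge 2$, and collect the exponent-$1$ primes into the matroidal factor $J$. Your deduction of (ii)--(iv) from the ``claim'' is clean and in fact tidier than the paper's more interlaced case analysis; the observations that $\Ass(I:x)\subseteq\Ass(I)$ (so unmixedness is inherited) and that $J=(I:u)$ for a suitable $u$ (so $J$ is polymatroidal and, being radical, matroidal) are exactly what is needed.

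The gap is, as you say yourself, the claim. Your direct exchange sketch is not yet an argument and you acknowledge this. The paper takes your second route: induction on $d$ through colon ideals $(I:x)$, with Lemma~\ref{L1} as the base $d=2$ and a separate $d=3$ step, the latter handled by explicit analysis of the possible primary decompositions. What you are missing is that the \emph{choice of $x$} must be adapted to the case at hand. For instance, to rule out two primes $\frak{p}_i,\frak{p}_j$ both with exponent $\ge 2$ sharing a variable, one picks $x\in\frak{p}_j\setminus\frak{p}_i$ (possible since they have equal height and are distinct), so that in $(I:x)$ the component $\frak{p}_i^{a_i}$ survives with exponent $\ge 2$ while $\frak{p}_j$ remains present; then the inductive hypothesis forces disjointness. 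In the mixed case $a_i\ge 2$, $a_j=1$ the paper instead picks $x$ outside \emph{both} $\frak{p}_i$ and $\frak{p}_j$ (possible since $\frak{m}\notin\Ass(I)$ and there are at least three associated primes in that situation), so both components pass to $(I:x)$ intact. If instead you take $x\in\frak{p}_i\cap\frak{p}_j$, both exponents can drop to $1$ and the inductive classification allows them to sit together inside the matroidal part, so no contradiction results; this is precisely the obstacle you flag in your last sentence, and it is resolved by the right choice of $x$, not by a harder exchange argument.
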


\begin{proof}
$(\Leftarrow)$. It is clear.\\
$(\Rightarrow)$. Let $I$ be an unmixed polymatroidal ideal. If $\frak{m}\in\Ass(I)$, then $\Ass(I)=\{\frak{m}\}$ and by applying Theorem \ref{T2}, we have $I=\frak{m}^d$. Now, suppose that $\frak{m}\notin\Ass(I)$ and in this case $\mid\Ass(I)\mid\geq 2$. We prove the result by induction on $d$. If $d=2$, then by Lemma \ref{L1}, we have the result. Suppose $d=3$. Since $\frak{m}\notin\Ass(I)$, we have 
$I=(I:\frak{m})=\bigcap_{i=1}^n(I:x_i)$. Since all $(I:x_i)$ are polymatroidal ideals of degree $2$, by Lemma \ref{L1}, we have the following minimial primary decomposition $I=\frak{p}_1^2\cap\frak{p}_2^2\cap\ldots\cap\frak{p}_t^2\cap\frak{q_1}\cap\ldots\cap\frak{q}_s$, where $t,s$ are non-negative integers. If $t\geq 2$, then there exists $x_i\in\frak{p}_2\setminus\frak{p}_1$, since $\frak{p}_2\nsubseteq\frak{p}_1$. Therefore, by using \cite[Lemma 2.1]{M}, $\frak{p}_1^2\cap\frak{p}_2$ is a factor member in the minimal primary decomposition of $(I:x_i)$ and this is impossible by Lemma \ref{L1}. Thus $t\leq 1$. If $t=0$, then $I$ is an unmixed matroidal ideal.
Let $t=1$. If $s\geq 2$, then $\frak{q}_2\nsubseteq\frak{p}^2_1\cap\frak{q_1}$, since $I$ is unmixed. In this case, there exists  $x_i\in\frak{q}_2\setminus\frak{p}_1^2\cap\frak{q}_1$ and again by using \cite[Lemma 2.1]{M}, $\frak{p}_1^2\cap\frak{q}_1$  is a factor member in the minimal primary decomposition of $(I:x_i)$ and this is impossible by Lemma \ref{L1}. Thus $I=\frak{p}_1^2\cap\frak{q}_1$. If $x_i\in\frak{p}_1\cap\frak{q}_1$, then $x_i^2\in I$ and this is impossible, since $I$ is of degree $3$. Therefore $G(\frak{p}_1)\cap G(\frak{q}_1)=\emptyset$ and $I=\frak{p}^2_1\frak{q}_1$. 
Suppose $d\geq 4$ and the result has been proved for $d-1$. By applying Theorem \ref{T2} and our assumption, there are integers $a_i>0$ such that $I=\frak{p}_1^{a_1}\cap\frak{p}_2^{a_2}\cap\ldots\cap\frak{p}_s^{a_s}\cap\ldots\cap\frak{p}_t^{a_t}$ such that all $\frak{p}_i$'s have the same height. If $a_i=1$ for each $1\leq i\leq t$, then $I$ is an unmixed matroidal ideal. Suppose  $a_i\geq 2$ for $1\leq i\leq s$ and $a_j=1$ for $s+1\leq j\leq t$ and we may assume that $s\geq 1$. If $G(\frak{p}_i)\cap G(\frak{p}_j)=\emptyset$ for all $1\leq i\leq t$, then $I=\frak{p}_1^{a_1}\frak{p}_2^{a_2}\ldots\frak{p}_t^{a_t}$ and 
$\sum_{i=1}^ta_i=d$. Now, suppose that $G(\frak{p}_i)\cap G(\frak{p}_j)\neq\emptyset$ for some $1\leq i\neq j\leq t$. If $t=s$, then $a_i\geq 2$ for $1\leq i\leq t$
and also $t\geq 2$ since $\frak{m}\notin\Ass(I)$. From $G(\frak{p}_i)\cap G(\frak{p}_j)\neq\emptyset$, we can choose $x_l\in G(\frak{p}_i)\cap G(\frak{p}_j)$.
Since $\frak{p}_i^{a_i-1}\cap \frak{p}_j^{a_j-1}$ is a factor member in the minimal primary decomposition of $(I:x_l)$ and $(I:x_l)$ is a polymatroidal ideal of degree $d-1$, by induction hypothesis we deduce that $G(\frak{p}_i)\cap G(\frak{p}_j)=\emptyset$ and this is contradiction. Therefore, in this case $G(\frak{p}_i)\cap G(\frak{p}_j)=\emptyset$ for all $1\leq i\neq j\leq t$ and so $I$ satisfies in condition $(ii)$. Now, we assume that $1\leq s<t$. 
By using the above argument, we can assume that $G(\frak{p}_i)\cap G(\frak{p}_j)=\emptyset$ for all $1\leq i\neq j\leq s$. Also, if $s=1$ and $t=s+1$ then again by the above argument we have  $G(\frak{p}_i)\cap G(\frak{p}_j)=\emptyset$ for all $1\leq i\neq j\leq t$ and $I$ satisfies in condition $(ii)$. Now, suppose that 
$s+1<t$, by the above mention $G(\frak{p}_i)\cap G(\frak{p}_j)=\emptyset$ for all $1\leq i\neq j\leq s$. Let $1\leq i\leq s$ and $1+s\leq j\leq t$ such that 
$G(\frak{p}_i)\cap G(\frak{p}_j)\neq\emptyset$. We can choose a variable $x_r$ from out of $G(\frak{p}_i)\cup G(\frak{p}_j)$. Then $(I:x_r)$ is a polymatroidal ideal of degree $d-1$ with $G(\frak{p}_i)\cap G(\frak{p}_j)\neq\emptyset$ and this is a contradiction. Hence $G(\frak{p}_i)\cap G(\frak{p}_j)=\emptyset$ for all $1\leq i\neq j\leq s$ and $G(\frak{p}_r)\cap G(\frak{p}_l)=\emptyset$  for all $1\leq r\leq s$ and all $1+s\leq l\leq t$. Therefore
from $I=\frak{p}_1^{a_1}\cap\frak{p}_2^{a_2}\cap\ldots\cap\frak{p}_s^{a_s}\cap\ldots\cap\frak{p}_t^{a_t}$, we set $J=\frak{p}_j^{a_j}\cap\ldots\cap\frak{p}_l^{a_l}$ such that $s+1\leq j\leq l\leq t$ and for each $r=j,\ldots, l$ there is $j\leq k\neq r\leq l$ such that 
$G(\frak{p}_r)\cap G(\frak{p}_k)\neq\emptyset$. It is clear that there is a monomial element $u$ of $R$ such that $(I:u)=J$ and $G(\frak{p}_i)\cap G(J)=\emptyset$ for all $i\neq j,\ldots, l$. Thus $I$ satisfy in condition $(iii)$. This completes the result.
\end{proof}

To clarify Theorem \ref{T3}, we present the following examples.
\begin{Example}\label{E1}
Let $n=4$ and $I=(x_1^2x_3,x_1^2x_4,x_2^2x_3,x_2^2x_4,x_1x_2x_3,x_1x_2x_4)$. Then $I$ is unmixed polymatroidal of degree $3$ and by applying Theorem \ref{T2},
$I=(x_1,x_2)^2\cap(x_3,x_4)$. Therefore $I=(x_1,x_2)^2(x_3,x_4)$ as Theorem \ref{T3}(ii).
\end{Example}

\begin{Example}\label{E2}
Let $n=5$ and \[I=(x_1^2x_3x_4,x_1^2x_3x_5,x_1^2x_4x_5,x_2^2x_3x_4,x_2^2x_3x_5,x_2^2x_4x_5,x_1x_2x_3x_4,x_1x_2x_3x_5,x_1x_2x_4x_5).\] Then $I$ is unmixed polymatroidal of degree $4$ and by using Theorem \ref{T2}, $I=(x_1,x_2)^2\cap(x_3,x_4)\cap(x_3,x_5)\cap(x_4,x_5)$. Thus $I=(x_1,x_2)^2(x_3x_4,x_3x_5,x_4x_5)$
 as Theorem \ref{T3}(iii).
\end{Example}

Theorem \ref{T3} immediately implies the following result.
\begin{Corollary}\cite[Theorem 3.4]{V1}
Let $I$ be a Veronese type ideal. Then $I$ is unmixed if and only if $I$ is CM. 
\end{Corollary}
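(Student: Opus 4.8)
The plan is to read the result off Theorem~\ref{T3} together with the Herzog--Hibi classification \cite{HH2} recalled in the introduction: a polymatroidal ideal is Cohen--Macaulay exactly when it is principal, a Veronese ideal $\frak{m}^d$, or a squarefree Veronese ideal. Since every Cohen--Macaulay ideal is unmixed, only the forward implication needs work, and the strategy is to check that, among the four shapes of unmixed polymatroidal ideals described in Theorem~\ref{T3}, a Veronese type ideal can realize only the three Cohen--Macaulay ones.

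I would first normalize the data. Assuming $\supp(I)=[n]$ as throughout Section~2, write $I=I_{(d;a_1,\dots,a_n)}$; replacing each $a_j$ by $c_j=\max\{\deg_{x_j}(u):u\in G(I)\}$ does not change $I$, so we may assume $a_j=c_j$ for all $j$. Then every $u\in G(I)$ satisfies $\deg_{x_j}(u)\le a_j$, and conversely every degree-$d$ monomial $x^b$ with $b_j\le a_j$ for all $j$ lies in $I$. Now apply Theorem~\ref{T3}. Type (i) gives $I=\frak{m}^d$, a Veronese ideal. In type (iv) every generator of $I$ is squarefree, so $a_j\le 1$; full support forces $a_j\ge 1$, hence $a_j=1$ for all $j$ and $I=I_{(d;1,\dots,1)}$ is a squarefree Veronese ideal. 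In both cases $I$ is Cohen--Macaulay.

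The heart of the proof is to rule out types (ii) and (iii) except when they degenerate to a principal ideal. In both we may write $I=\frak{p}_1^{a_1}\cdots\frak{p}_t^{a_t}J$, where the sets $G(\frak{p}_1),\dots,G(\frak{p}_t),\supp(J)$ are pairwise disjoint with union $[n]$, all $\frak{p}_i$ and $J$ have a common height $h$, and $J=R$ in case (ii); put $e=\deg(J)$, so $\sum_i a_i+e=d$. Since the blocks have disjoint variable sets, a degree-$d$ monomial $x^b$ lies in $I$ iff $\deg_{G(\frak{p}_i)}(x^b)\ge a_i$ for all $i$ and $x^b$ restricted to $\supp(J)$ lies in $J$; because the relevant exponents sum to $\sum_i a_i+e$ and every nonzero monomial of $J$ has degree $\ge e$, this is equivalent to $\deg_{G(\frak{p}_i)}(x^b)=a_i$ for all $i$ together with $x^b|_{\supp(J)}\in G(J)$. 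On the other hand the normalized exponent vector above is $a_i$ on each $G(\frak{p}_i)$ and $1$ on $\supp(J)$. Suppose now some block has height $h\ge 2$ and at least one further block is present (automatic once there are $\ge 2$ blocks, since all share the height $h$). Starting from a minimal generator $u$, I transfer one unit of degree out of some other block --- decreasing by $1$ a variable of that block that is nonzero in $u$, or, if the other block is $J$, deleting one variable from the corresponding factor of $u$ lying in $G(J)$ --- and into a variable of the wide block whose exponent in $u$ is $<a_i$ (such a variable exists since the wide block has $\ge 2$ variables carrying total exponent $a_i$). The resulting degree-$d$ monomial $x^b$ still satisfies $b_j\le a_j$ for every $j$, so $x^b\in I_{(d;a_1,\dots,a_n)}$; but it violates either $\deg_{G(\frak{p}_i)}(x^b)=a_i$ or $x^b|_{\supp(J)}\in G(J)$, so $x^b\notin I$, contradicting $I=I_{(d;a_1,\dots,a_n)}$. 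Hence, whenever $\ge 2$ blocks are present, every block has height $1$: each $\frak{p}_i=(x_{j_i})$, and by Remark~\ref{R0} the support of $J$ has size $e$, so $J$ is a single squarefree monomial (or $J=R$); in either case $I$ is a principal ideal. If only one block is present, it is either $\frak{p}_1=\frak{m}$, giving $I=\frak{m}^d$, or $J$ alone, which is type (iv) and already handled. Thus in every case $I$ is principal, a Veronese ideal, or a squarefree Veronese ideal, hence Cohen--Macaulay; the converse is immediate. The main obstacle is precisely this perturbation step: one must recognize that the ``degree $\ge a_i$ in each block'' condition defining a product of prime powers with pairwise disjoint supports is not of Veronese type as soon as one block contains two or more variables.
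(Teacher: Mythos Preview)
Your argument is correct and proceeds along the same line the paper intends: deduce the corollary from Theorem~\ref{T3} together with the Herzog--Hibi classification of Cohen--Macaulay polymatroidal ideals. The paper itself offers no details beyond the sentence ``Theorem~\ref{T3} immediately implies the following result,'' so your proof is not a different route but rather a careful unpacking of what the paper leaves implicit---namely, that a Veronese type ideal cannot fall into case~(ii) or~(iii) of Theorem~\ref{T3} unless it degenerates to a principal ideal. Your perturbation step (shifting one unit of degree from one block to a wider block while staying under the normalized exponent bounds $c_j$) is exactly the verification that makes the word ``immediately'' honest; it is sound, and the case split (one block versus several, $h=1$ versus $h\ge 2$) is complete.

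One minor remark on exposition: you reuse the symbol $a_i$ both for the exponent bounds in $I_{(d;a_1,\dots,a_n)}$ and for the exponents on the primes $\frak{p}_i^{a_i}$ in Theorem~\ref{T3}, which momentarily obscures the argument; renaming one of the two would help.
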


Hartshorne, in \cite[Proposition 1.1]{H}, provides the following definition, which is also referenced in \cite[Definition 3.1]{BJ}.

\begin{Definition}\label{D}
A monomial ideal $I$ of $R$ with height $h$ is connected in codimension one when for each pair of distinct prime ideals $\frak{p},\frak{q}\in\Min(I)$ there exists a sequence of minimial prime ideals $\frak{p}=\frak{p}_1,\ldots,\frak{p}_r=\frak{q}$ such that $\mid G(\frak{p}_i+\frak{p}_{i+1})\mid=h+1$ for all $1\leq i\leq r-1$. In particular, in this case $I$ is equidimentional and $\mid G(\frak{p}_i\cap\frak{p}_{i+1})\mid=h-1$ for all $1\leq i\leq r-1$.
\end{Definition}

We provide a simplified proof of the main results presented in \cite{BJ}.
\begin{Corollary}\cite[Theorem 3.6]{BJ}\label{C1} Let $I$ be a monomial ideal. Then $I$ is a matroidal ideal of connected in codimension one if and only if $I$ is a squarefree Veronese type.
\end{Corollary}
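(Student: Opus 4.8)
The plan is to reduce the statement to the classification of unmixed matroidal ideals obtained above and then to read off Hartshorne's connectedness condition (Definition \ref{D}) directly from the list of minimal primes. For the easy implication, assume $I$ is a squarefree Veronese type ideal; by our standing convention it is generated by all squarefree monomials of degree $d$ in $x_1,\ldots,x_n$, so it is visibly matroidal, and its minimal primes are exactly the ideals $\frak{p}_A=(x_i:i\in A)$ with $A\subseteq[n]$ and $\mid A\mid=n-d+1=:h$. Given two such primes $\frak{p}_A,\frak{p}_B$, I would pass from $A$ to $B$ by repeatedly deleting one index and adjoining another; along the resulting chain $A=A_1,\ldots,A_r=B$ one has $\mid A_i\cup A_{i+1}\mid=h+1$, hence $\mid G(\frak{p}_{A_i}+\frak{p}_{A_{i+1}})\mid=h+1$, which is precisely what Definition \ref{D} requires. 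When $d=1$ there is a single minimal prime and the condition is vacuous.

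For the converse, suppose $I$ is matroidal and connected in codimension one. By Definition \ref{D} the ideal $I$ is equidimensional, and since $I$ is squarefree, $\Ass(R/I)=\Min(I)$, so $I$ is unmixed; if $d=1$ then $I=\frak{m}$ is already a squarefree Veronese type ideal, so assume $d\geq2$. The first step is to invoke Theorem \ref{T1} together with Remark \ref{R0}: $I$ is the edge ideal of a complete $d$-uniform $m$-partite $k$-balanced hypergraph with parts $S_1,\ldots,S_m$, where $\mid S_i\mid=k$, $mk=n$ and $h:=\height(I)=k(m-d+1)$. The key combinatorial step is to identify $\Min(I)$ with the vertex-cover primes $\frak{p}_T=(x_l:l\in\bigcup_{i\in T}S_i)$ for $T\subseteq[m]$ with $\mid T\mid=m-d+1$ --- a subset of $[n]$ is a cover of this hypergraph exactly when it contains all but at most $d-1$ of the parts $S_i$ --- and then to record the generator counts $\mid G(\frak{p}_T)\mid=k(m-d+1)=h$ and, for $T\neq T'$, $\mid G(\frak{p}_T+\frak{p}_{T'})\mid=k\mid T\cup T'\mid\geq k(m-d+2)=h+k$.

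The endgame is then immediate. Since $d\geq2$ and $m\geq d$ (Proposition \ref{P0}), there are at least two distinct admissible sets $T$, hence $\mid\Min(I)\mid\geq2$; feeding two distinct minimal primes into Definition \ref{D} produces a chain whose first link consists of two distinct primes $\frak{p}_1,\frak{p}_2$ with $\mid G(\frak{p}_1+\frak{p}_2)\mid=h+1$, and since $\frak{p}_1\neq\frak{p}_2$ comparison with the inequality above forces $h+1\geq h+k$, i.e. $k=1$. Therefore $\mid S_i\mid=1$, equivalently $\mid\supp(I:x_i)\mid=n-1$ for all $i$, and Corollary \ref{C0} (or directly Remark \ref{R0}) yields that $I$ is a squarefree Veronese type ideal.

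I expect the only step requiring genuine care to be the bookkeeping for the multipartite hypergraph: checking that its minimal vertex covers are precisely the unions of $m-d+1$ of the parts, and then getting the two generator counts $\mid G(\frak{p}_T)\mid$ and $\mid G(\frak{p}_T+\frak{p}_{T'})\mid$ exactly right (these facts can alternatively be quoted from \cite{KM}). Everything else is either already packaged into Theorems \ref{T0}--\ref{T1} and Remark \ref{R0}, or is a one-line check of the degenerate case $d=1$ and of the fact that for $d\geq2$ there really are at least two minimal primes to work with.
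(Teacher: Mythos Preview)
Your proof is correct, but both directions take a different route from the paper's argument.

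For the implication squarefree Veronese $\Rightarrow$ connected in codimension one, the paper simply observes that a squarefree Veronese type ideal is Cohen--Macaulay and then quotes \cite[Corollary 2.4]{H}. You instead give a direct combinatorial construction of the connecting chains between any two minimal primes $\frak{p}_A$ and $\frak{p}_B$. Your argument is self-contained and avoids the appeal to Hartshorne, at the cost of a few extra lines.

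For the converse, the paper's proof is much shorter: it argues that Definition \ref{D} forces $x_j\in\supp(I:x_i)$ for all $i\neq j$, whence $\mid\supp(I:x_i)\mid=n-1$, and then Corollary \ref{C0} finishes. You instead pass through the full classification of Theorem \ref{T1} and Remark \ref{R0}, identify the minimal primes of the complete $d$-uniform $m$-partite $k$-balanced hypergraph as the $\frak{p}_T$ with $\mid T\mid=m-d+1$, compute $\mid G(\frak{p}_T+\frak{p}_{T'})\mid\geq h+k$ for $T\neq T'$, and use the codimension-one link $\mid G(\frak{p}_1+\frak{p}_2)\mid=h+1$ to force $k=1$. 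This is a genuinely different and more structural argument: it makes the minimal primes completely explicit (which the paper's proof does not), and it shows transparently \emph{why} $k>1$ obstructs connectedness --- adjacent primes in any chain would have to differ by an entire block $S_i$ of size $k$. The price is the bookkeeping you flag about minimal vertex covers and generator counts, whereas the paper's route sidesteps this by working with $\supp(I:x_i)$ directly.
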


\begin{proof}
Suppose $I$ is a matroidal ideal of connected in codimension one and $I=\bigcap_{i=1}^t\frak{p}_i$ be a minimal primary decomposition of $I$. Then by Definition \ref{D}, we have $x_j\in\supp(I:x_i)$ for all $1\leq i\neq j\leq n$. Thus $\mid\supp(I:x_i)\mid=n-1$ for all $1\leq i\leq n$ and by Corollary \ref{C0}, $I$ is a squarefree Veronese type.
Conversely, every squarefree Veronese type is CM and so we have the result by \cite[Corollary 2.4]{H}.
\end{proof}

\begin{Corollary}\cite[Theorem 3.9]{BJ} Let $I$ be an unmixed polymatroidal ideal. Then $I$ is connected in codimension one if and only if $I$ is CM.
\end{Corollary}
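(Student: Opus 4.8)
The plan is to combine the classification of unmixed polymatroidal ideals from Theorem \ref{T3} with a short analysis of when the graph on $\Min(I)$ implicit in Definition \ref{D} is connected. The direction ``$I$ Cohen--Macaulay $\Rightarrow$ $I$ connected in codimension one'' is immediate: every Cohen--Macaulay ideal is connected in codimension one by \cite[Corollary 2.4]{H}, which is exactly the implication already used at the end of the proof of Corollary \ref{C1}. For the converse, suppose $I$ is an unmixed polymatroidal ideal that is connected in codimension one, and put $h=\height(I)$. The elementary observation that drives everything is that if $\frak{p}$ and $\frak{q}$ are monomial prime ideals, each minimally generated by $h$ variables, with $G(\frak{p})\cap G(\frak{q})=\emptyset$, then $\frak{p}+\frak{q}$ is minimally generated by exactly $2h$ variables, so the requirement $\mid G(\frak{p}+\frak{q})\mid=h+1$ of Definition \ref{D} can be met only when $h=1$. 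Rephrased, connectedness in codimension one says that the graph whose vertices are the elements of $\Min(I)$, with $\frak{p}$ and $\frak{q}$ joined by an edge whenever $\mid G(\frak{p}+\frak{q})\mid=h+1$, is connected.

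I then run through the four cases of Theorem \ref{T3}. In case (i), $I=\frak{m}^d$ is a Veronese ideal, hence Cohen--Macaulay. In case (iv), $I$ is an unmixed matroidal ideal that is connected in codimension one, so Corollary \ref{C1} forces $I$ to be a squarefree Veronese type, hence Cohen--Macaulay. In cases (ii) and (iii), $\Min(I)$ consists of the primes $\frak{p}_1,\ldots,\frak{p}_t$ together with, in case (iii), the minimal primes of $J$; all of these have height $h$, and by the disjointness hypotheses in Theorem \ref{T3} any two of them have disjoint sets of generating variables, except possibly for two minimal primes of $J$. Hence whenever a second vertex is available besides $\frak{p}_1$---that is, in case (ii) with $t\geq 2$, and in case (iii), where $t\geq 1$ and $\Min(J)\neq\emptyset$---the vertex $\frak{p}_1$ is isolated in the graph unless $h=1$, and connectedness of a graph with at least two vertices then forces $h=1$. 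If $h=1$ then each $\frak{p}_i$ is a single variable $(x_i)$ and, in case (iii), $J$ is a height-one matroidal ideal, so $J=\bigcap_{\frak{q}\in\Min(J)}\frak{q}$ is the principal ideal generated by the product of those variables; in either case $I$ is then a principal monomial ideal, which is Cohen--Macaulay. The only remaining possibility is case (ii) with $t=1$, where $I=\frak{p}_1^{a_1}$; since $I$ is fully supported, this forces $\frak{p}_1=\frak{m}$ and $I=\frak{m}^d$, again Cohen--Macaulay. Therefore $I$ is Cohen--Macaulay in all cases.

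The step I expect to be the main obstacle is minor but easy to overlook: the edge condition $2h=h+1$ of Definition \ref{D} holds exactly in the degenerate case $h=1$, and one must then observe that the structures in cases (ii) and (iii) of Theorem \ref{T3} collapse to principal monomial ideals, rather than producing new connected examples that fail to be Cohen--Macaulay. Once this is settled, the rest is bookkeeping over the four families of Theorem \ref{T3}, using that Veronese ideals $\frak{m}^d$, squarefree Veronese type ideals, and principal monomial ideals all have Cohen--Macaulay quotients.
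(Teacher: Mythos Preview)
Your proof is correct and follows the same route as the paper, which simply says ``by applying Theorem \ref{T3} and Corollary \ref{C1} we conclude that $I$ is CM''; you have spelled out the case analysis that the paper leaves implicit, in particular the observation that in cases (ii) and (iii) of Theorem \ref{T3} the prime $\frak{p}_1$ is isolated in the codimension-one graph unless $h=1$, which collapses those cases to principal ideals.
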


\begin{proof}
If  $I$ is  CM, then the result follows from \cite[Corollary 2.4]{H}. Conversely, if $I$ is an unmixed polymatroidal ideal that is connected in codimension one, then by applying Theorem \ref{T3} and Corollary \ref{C1} we conclude that $I$ is CM.
\end{proof}
\subsection*{Acknowledgements} We are grateful to an anonymous referee for substantial input to improving the article,
and in particular for providing the general statement of Theorem \ref{T3} and examples \ref{E1} and \ref{E2}.

%%%%%%%%%%%%%%%%%%%%%%%%%%%%%%%%%%%%%%%%%%%%%%%%%%%%%%%%%%%%%%%%%%%%%%%%%%%%%

\end{document}